\providecommand{\cal}{\mathcal}
\renewcommand{\Bbb}{\mathbb}
\newcommand{\Aaa}{{\cal{A}}}
\newcommand{\Bee}{{\cal{B}}}
\newcommand{\Pee}{{\cal{P}}}
\newcommand{\Raa}{{\cal{R}}}
\newcommand{\See}{{\cal{S}}}
\newcommand{\Tee}{{\cal{T}}}
\newcommand{\Yu}{{\cal{U}}}
\newcommand{\Qyu}{{\Bbb{Q}}}
\renewcommand{\phi}{\varphi}
\renewcommand{\rho}{\varrho}
\newcommand{\RC}{\operatorname{RC}}
\newcommand{\RO}{\operatorname{RO}}
\newcommand{\CO}{\operatorname{CO}}
\newcommand{\rest}{\restriction}
\newcommand{\cl}{\operatorname{cl}}
\newcommand{\Int}{\operatorname{int}}
\renewcommand{\int}{\operatorname{int}}
\newcommand{\intb}{\int_{\beta X}}
\newcommand{\clb}{\cl_{\beta X}}
\newtheorem{tw}{Theorem}[section]
\newtheorem{wn}[tw]{Corollary}
\newtheorem{lm}[tw]{Lemma}
\newtheorem{prop}[tw]{Proposition}
\newtheorem{claim}[tw]{Claim}
\theoremstyle{definition}
\newtheorem{question}[tw]{Question}
\newtheorem{uw}[tw]{Remark}
\theoremstyle{remark}
\newcommand{\bR}{{\mathbb{R}}}
\newcommand{\bQ}{{\mathbb{Q}}}
\title{A generalization of $\varkappa$-metrizable  spaces }
\author{{\sc Andrzej Kucharski}
{\small Institute of Mathematics}\\
{\small University of Silesia}\\
{\small Katowice, POLAND}\\
{\small\texttt{akuchar@math.us.edu.pl}}
\and
{\sc S\l awomir Turek}
{\small Institute of Mathematics}\\
{\small Cardinal Stefan Wyszynski Univwersity in Warsaw}\\
{\small Warszawa, POLAND}\\
{\small\texttt{s.turek@uksw.edu.pl}}
}
\begin{document}

\maketitle

\begin{abstract}
 We introduce a new class of $\varkappa$-metrizable spaces, namely countably $\varkappa$-metrizable spaces. We show that the class of all $\varkappa$-metrizable spaces is a proper subclass of counably $\varkappa$-metrizable spaces. On the other hand,  for pseudocompact spaces the new class coincides with $\varkappa$-metrizable spaces. We prove a generalization of a Chigogidze result that the \v{C}ech-Stone compactification of a pseudocompact countably $\varkappa$-metrizable space is $\varkappa$-metrizable.
\end{abstract}
\date{}

\vspace{3mm}
\noindent
{\bf MSC(2010)}
Primary:
54B35; %Spectra
Secondary:
54D35, %Extensions of spaces (compactifications, ...)

\noindent
{\bf Keywords:} \v{C}ech-Stone compactification, pseudocompact, open maps,  measurable cardinal, $\varkappa$-metrizable spaces.

%\tableofcontents

\section{Introduction}

All topological spaces under consideration are assumed to be at least Tychonoff.

Recall that
a set $A\subseteq X$ is regular closed in a topological space $X$ if $\cl\Int A= A$.
For a topological space $X$ let $\RC(X)$ denote the set of all regular closed sets in $X$  and $\CO(X)$ denote the set of all closed and open sets in $X$. The family of all complements of sets from the family $\RC (X)$ forms family of regular open sets which will be denoted $\RO(X)$.
A topological space $X$ is $\varkappa$-\emph{metrizable} if  there exists  function $\rho:X\times\RC(X)\to[0,\infty)$ satisfying the following axioms
\begin{enumerate}
	\item [(K1)] $\rho(x,C)=0$ if and only if $x\in C$ for any $x\in X$ and $C\in \RC(X)$,
	\item [(K2)] If $C\subseteq D$, then $\rho(x,C)\geq \rho(x,D)$ for any $x\in X$ and $C,D\in\RC(X)$,
	\item [(K3)] $\rho(\cdot, C)$ is a continuous function,
	\item [(K4)] $\rho(x,\cl(\bigcup_{\alpha<\lambda} C_\alpha))=\inf_{\alpha<\lambda}\rho(x,C_\alpha)$ for any
	non-decreasing totally ordered sequence $\{C_\alpha:\alpha<\lambda\}\subseteq \RC(X)$ and any $x\in X$.
\end{enumerate}
We say that $\rho$ is $\varkappa$\textit{-metric} if it satisfies conditions $(K1)-(K4)$.
The concept of a $\varkappa$-metrizable space was introduced by Shchepin ~\cite{s76}.
The class of $\varkappa$-metrizable spaces is quite big; it contains (see e.g.~\cite{s76}, \cite{s79}, \cite{suz})

\begin{itemize}
\item  all metrizable spaces,

\item  Dugundji spaces,
\item all locally compact topological group,

\item the Sorgenfrey line.

\end{itemize}
Moreover
\begin{itemize}
\item a dense (open, regular closed) subspace of a $\varkappa$-metrizable space is
$\varkappa$-metrizable,
\item the product of any family of $\varkappa$-metrizable spaces is
$\varkappa$-metrizable,
\end{itemize}
On the other hand, the result of Chigogidze (see~\cite{ch82}) implies that compactifactions $\beta\omega$ and $\beta\mathbb{R}$ are not $\varkappa$-metrizable. Isiwata (see \cite{i86}) proved that the remainders $\beta\omega\setminus\omega$ and $\beta\mathbb{R}\setminus\mathbb{R}$ are not $\varkappa$-metrizable too.

\bigskip

 If $\rho\colon
 X\times\RC(X)\to[0,\infty)$ fulfills conditions  $(K1)-(K3)$ and condition
\[ (K4_\omega) \; \rho(x,\cl(\bigcup_{n<\omega} C_n))=\inf_{n<\omega}\rho(x,C_n)\mbox{ for any
	chain }\{C_n:n<\omega\}\mbox{ and any }x\in X \]
then we say that $\rho$ is \textit{countable $\varkappa$-metric} in $X$. A topological space which allows the existence of a countable $\varkappa$-metric we call {\em countably $\varkappa$-metrizable}.

\section{Representation of countably $\varkappa$-metrizable spaces}

We will show that every countably $\varkappa$-metrizable space which satisfies countable chain condition is $\varkappa$-metrizable. Countably $\varkappa$-metrizability differs from the notion of the $\varkappa$-metrizability under some set-theoretical assumption. Next we will prove that in the case of a pseudocompact space countably $\varkappa$-metrizabilty implies $\varkappa$-metrizability.

\begin{prop}\label{ccc}
If $X$ satisfies countable chain condition then every countable \mbox{$\varkappa$-metric} is a $\varkappa$-metric.
\end{prop}
\begin{proof}
Let  $\{C_\alpha:\alpha<\lambda\}\subseteq \RC(X)$ be a non-decreasing totally ordered sequence. By countable chain condition
there exists an increasing sequence $\{\alpha_n:n\in\omega\}\subseteq\lambda$ such that $\bigcup\{C_\alpha:\alpha<\lambda\}=\bigcup\{C_{\alpha_n}:n\in\omega\}$. Hence we get
$$\inf_{\alpha<\lambda}\rho(x,C_\alpha)\leq \inf_{n\in\omega}\rho(x,C_{\alpha_n})=\rho(x,\cl(\bigcup_{n\in\omega}C_{\alpha_n}))=
\rho(x,\cl(\bigcup_{\alpha<\lambda}C_\alpha))\leq \inf_{\alpha<\lambda}\rho(x,C_\alpha).$$

\end{proof}
We give an example of countably $\varkappa$-metrizable space  which is not $\varkappa$-metrizable space.

Let $\tau$ be an uncountable cardinal.
An ultrafilter $\Yu$ on $\tau$ is \textit{$\tau$-complete} if $\Aaa\subseteq \Yu$ and $|\Aaa|<\tau$,  then  $\bigcap\Aaa\in\Yu$.
An uncountable cardinal $\tau$ is\textit{ measurable} if there exists a  $\tau$-complete free ultrafilter $\Yu$ on $\tau$.

Let $\tau$ be an infinite cardinal and $\Yu$ be a free ultrafilter on $\tau$. Let $X=\tau\cup\{\Yu\}$ be space with  a topology inherited from \v{C}ech-Stone compactification of $\tau$ %\textcolor{red}{It is better to note that $\{\Yu\}$ is the point %from $\beta\tau$ corresponding to the free ultrafilter $\Yu$}.

\begin{lm}\label{ult}
	If $C\in\RC(X)$, then $\Yu\in C$ 
	%\textcolor{red}{$\{\Yu\}\in C$} 
	if and only if $C\cap\tau\in\Yu.$
\end{lm}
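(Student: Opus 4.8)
The plan is to read off the topology of $X=\tau\cup\{\Yu\}$ from $\beta\tau$ and then reduce everything to one basic duality fact. Since $\tau$ carries the discrete topology, every point of $\tau$ is isolated in $X$, while the basic neighbourhoods of $\Yu$ are the traces on $X$ of the clopen sets $\overline{A}\subseteq\beta\tau$ with $A\in\Yu$; concretely, they are the sets $A\cup\{\Yu\}$ for $A\in\Yu$. The single fact I would isolate first is: for every $B\subseteq\tau$ one has $\Yu\in\cl B$ if and only if $B\in\Yu$. Indeed, a neighbourhood $A\cup\{\Yu\}$ of $\Yu$ meets $B$ exactly when $A\cap B\ne\emptyset$, so $\Yu\in\cl B$ fails precisely when some $A\in\Yu$ is disjoint from $B$, i.e. when $\tau\setminus B\in\Yu$, i.e. when $B\notin\Yu$.

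Next I would record the behaviour of the interior. Writing $B:=C\cap\tau$, the set $B$ is open in $X$ (all its points are isolated) and $B\subseteq C$, hence $B\subseteq\Int C$; since also $\Int C\subseteq C$, we get $\Int C\cap\tau=B$, and therefore $\Int C\subseteq B\cup\{\Yu\}$. Using that $C$ is regular closed, $C=\cl(\Int C)\supseteq\cl B=\overline{B}$.

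With these observations the equivalence splits into two short arguments. For the implication $C\cap\tau\in\Yu\Rightarrow\Yu\in C$: if $B\in\Yu$, then by the duality fact $\Yu\in\overline{B}\subseteq C$. For the converse I would argue by contraposition: if $B\notin\Yu$, then $\tau\setminus B\in\Yu$, so $U:=(\tau\setminus B)\cup\{\Yu\}$ is a neighbourhood of $\Yu$; since $\Int C\subseteq B\cup\{\Yu\}$, the only possible common point of $U$ and $\Int C$ is $\Yu$ itself, and $\Yu\notin\Int C$ because an interior neighbourhood $A\cup\{\Yu\}\subseteq C$ would force $A\subseteq B$ with $A\in\Yu$, contradicting $B\notin\Yu$. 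Thus $U\cap\Int C=\emptyset$, so $\Yu\notin\cl(\Int C)=C$.

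The neighbourhood bookkeeping is routine; the one place where the hypothesis must be used honestly is the step $C=\cl(\Int C)$ (regular closedness), rather than plain closedness — a merely closed set such as $B\cup\{\Yu\}$ with $B\notin\Yu$ would violate the stated equivalence. So the main point to get right is that $\Yu$ can enter $C$ only through the closure of the isolated part $B$, which is exactly what regular closedness guarantees.
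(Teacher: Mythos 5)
Your proof is correct and takes essentially the same route as the paper's: both work with the basic neighbourhoods $D\cup\{\Yu\}$, $D\in\Yu$, use the regular-closedness identity $C=\cl\Int C$, and invoke the ultrafilter dichotomy (your contrapositive is exactly the paper's ``by maximality of the filter'' step), with identical backward directions. If anything, yours is slightly more careful: the paper passes from $(\{\Yu\}\cup D)\cap\Int C\ne\emptyset$ to $D\cap C\cap\tau\ne\emptyset$ without comment, whereas you explicitly rule out the case that $\Yu$ is the only intersection point by showing $\Yu\notin\Int C$ when $C\cap\tau\notin\Yu$.
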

\begin{proof}
A neighborhood of the point $\Yu$ is of the form $D\cup\{\Yu\}$, where $D\in\Yu.$ If $\Yu\in C=\cl\int C$ then $ (\{\Yu\}\cup D)\cap \int C\ne\emptyset.$ Thus $D\cap C\cap\tau\ne\emptyset$ for all $D\in\Yu,$ and  by maximality of the fiter $\Yu$ we have $C\cap\tau\in\Yu.$
	If $C\cap\tau\in\Yu$ then obviously $\Yu\in \cl(C\cap\tau)\subseteq C$ 
	%\textcolor{red}{$\{\Yu\}\in..$}.	
\end{proof}

\begin{lm}\label{co}
	$\RC(X)=\CO(X)$
\end{lm}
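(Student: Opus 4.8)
The plan is to prove the two inclusions separately. The inclusion $\CO(X)\subseteq\RC(X)$ holds in any space and needs no special structure: if $A$ is clopen, then $\int A=A$ and $\cl A=A$, so $\cl\int A=A$ and hence $A\in\RC(X)$. All the content therefore lies in the reverse inclusion $\RC(X)\subseteq\CO(X)$.

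For that direction I would first record the key structural feature of $X$: every point of $\tau$ is isolated, since $\tau$ sits in $\beta\tau$ as an open discrete subspace, so $\Yu$ is the unique non-isolated point of $X$. Now fix $C\in\RC(X)$; note that $C=\cl\int C$ is in particular closed. I would then split into two cases according to whether or not $\Yu\in C$.

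If $\Yu\notin C$, then $C\subseteq\tau$ consists entirely of isolated points, so $C$ is open; being also closed, $C\in\CO(X)$. If $\Yu\in C$, then Lemma \ref{ult} gives $C\cap\tau\in\Yu$, so that $(C\cap\tau)\cup\{\Yu\}$ is a basic open neighbourhood of $\Yu$ contained in $C$. Every other point of $C$ lies in $C\cap\tau$ and is isolated, hence interior to $C$ as well. Thus every point of $C$ is an interior point, so $C$ is open, and again $C\in\CO(X)$. In both cases $C$ is clopen, which yields $\RC(X)\subseteq\CO(X)$ and completes the proof.

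I do not expect a genuine obstacle here: the argument rests entirely on the isolated-point structure of $X$ together with the membership criterion for $\Yu$ supplied by Lemma \ref{ult}. The only step requiring any care is checking that $C$ is open at the single accumulation point $\Yu$ in the case $\Yu\in C$, and this is exactly where Lemma \ref{ult} is invoked to produce the neighbourhood $(C\cap\tau)\cup\{\Yu\}$ sitting inside $C$.
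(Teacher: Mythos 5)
Your proof is correct and follows essentially the same route as the paper's: the trivial inclusion $\CO(X)\subseteq\RC(X)$, then a case split on whether $\Yu\in C$, invoking Lemma~\ref{ult} in the second case to see that $C=\{\Yu\}\cup(C\cap\tau)$ is open. You merely spell out the details (isolated points of $\tau$, the basic neighbourhood of $\Yu$) that the paper leaves implicit.
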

\begin{proof}
	Obviously $\CO(X)\subseteq \RC(X)$. Let $C\in\RC(X)$ and consider the following cases.
	
	(1): $\Yu\notin C$. Then $C\subseteq \tau$ is an open subset of $X$.
	
	(2): $\Yu\in C$. Then  by Lemma~\ref{ult}$, C\cap\tau\in\Yu$. This finishes the proof because $C=\{\Yu\}\cup (C\cap\tau)$ is open set in $X$.
\end{proof}

\begin{tw} If $\tau$ is a measurable cardinal then the space $X=\tau\cup\{\Yu\}\subseteq\beta\tau$, where $\Yu$ is a $\tau$-complete free ultrafilter, is countably $\varkappa$-metrizable but not $\varkappa$-metrizable.
\end{tw}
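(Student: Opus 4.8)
The plan is to treat both halves of the statement from the single structural observation, recorded in Lemma~\ref{co}, that $\RC(X)=\CO(X)$: every regular closed set is clopen, and hence its indicator function is continuous. This suggests the candidate
\[
\rho(x,C)=\begin{cases}0,&x\in C,\\ 1,&x\notin C,\end{cases}
\]
for $x\in X$ and $C\in\RC(X)$. Axiom $(K1)$ is immediate, and $(K2)$ is the elementary check that $C\subseteq D$ forces $\rho(\cdot,C)\geq\rho(\cdot,D)$ pointwise (the only nontrivial case being $x\in D\setminus C$). For $(K3)$ I would note that points of $\tau$ are isolated, while at $\Yu$ the clopen set $C$ (when $\Yu\in C$) or its complement (when $\Yu\notin C$) is a neighbourhood on which $\rho(\cdot,C)$ is constant; hence $\rho(\cdot,C)$ is continuous.

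First I would verify that this $\rho$ is a countable $\varkappa$-metric, i.e.\ that it satisfies $(K4_\omega)$. Both sides take values in $\{0,1\}$, so it suffices to show, for a countable chain $\{C_n:n<\omega\}\subseteq\RC(X)$ and a point $x$, that $x\in\cl(\bigcup_n C_n)$ iff $x\in C_n$ for some $n$. For $x\in\tau$ this is clear since $x$ is isolated. For $x=\Yu$, since $\bigcup_n C_n$ is open and every point of $\tau$ is isolated, one has $\cl(\bigcup_n C_n)\cap\tau=\bigcup_n(C_n\cap\tau)$; by Lemma~\ref{ult}, $\Yu\in\cl(\bigcup_n C_n)$ iff $\bigcup_n(C_n\cap\tau)\in\Yu$, while $\Yu\in C_n$ iff $C_n\cap\tau\in\Yu$. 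Thus the required equivalence reduces to: $\bigcup_n(C_n\cap\tau)\in\Yu$ iff some $C_n\cap\tau\in\Yu$. The forward implication is the crux and is exactly where measurability enters: since $\Yu$ is $\tau$-complete and $\omega<\tau$, it is countably complete, so a countable union of sets outside $\Yu$ stays outside $\Yu$; the converse is trivial. This establishes that $X$ is countably $\varkappa$-metrizable.

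The harder half --- and the main obstacle --- is to show that $X$ admits no $\varkappa$-metric whatsoever. Assume toward a contradiction that $\rho$ is a $\varkappa$-metric. Fix an enumeration $\tau=\{\xi_\alpha:\alpha<\tau\}$ and set $C_\alpha=\{\xi_\beta:\beta<\alpha\}$. Each $C_\alpha$ has cardinality $<\tau$, so by freeness together with $\tau$-completeness $C_\alpha\notin\Yu$; by Lemma~\ref{ult} this gives $\Yu\notin C_\alpha$, and by Lemma~\ref{co} each $C_\alpha\in\RC(X)$. The family $\{C_\alpha:\alpha<\tau\}$ is a non-decreasing chain with $\bigcup_\alpha C_\alpha=\tau$, hence $\cl(\bigcup_\alpha C_\alpha)=X$. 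Applying $(K4)$ at the point $\Yu$ yields
\[
0=\rho(\Yu,X)=\rho\bigl(\Yu,\cl(\textstyle\bigcup_{\alpha<\tau}C_\alpha)\bigr)=\inf_{\alpha<\tau}\rho(\Yu,C_\alpha),
\]
the first equality holding by $(K1)$ since $\Yu\in X$.

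From this infimum being $0$ I would choose $\alpha_n<\tau$ with $\rho(\Yu,C_{\alpha_n})<1/n$ and put $\alpha^*=\sup_n\alpha_n$. Here the regularity of $\tau$ (measurable cardinals are regular, and $\tau>\omega$) is decisive: a countable supremum of ordinals below $\tau$ stays below $\tau$, so $\alpha^*<\tau$. By $(K2)$ we get $\rho(\Yu,C_{\alpha^*})\leq\rho(\Yu,C_{\alpha_n})<1/n$ for every $n$, whence $\rho(\Yu,C_{\alpha^*})=0$ and so $\Yu\in C_{\alpha^*}$ by $(K1)$. But $|C_{\alpha^*}|=|\alpha^*|<\tau$, so $C_{\alpha^*}\notin\Yu$ and therefore $\Yu\notin C_{\alpha^*}$ by Lemma~\ref{ult} --- a contradiction. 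The only subtle points to guard are that the $C_\alpha$ are genuinely regular closed (handled by Lemma~\ref{co}) and that the passage from ``$\inf=0$'' to ``value $0$ at a single small level'' rests on $\cf(\tau)>\omega$; this is the heart of why $\tau$ must be (at least) regular and uncountable.
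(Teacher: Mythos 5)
Your proof is correct and follows essentially the same route as the paper: the same two-valued $\rho$, with $\sigma$-completeness of $\Yu$ giving $(K4_\omega)$ via Lemma~\ref{ult} and Lemma~\ref{co}, and the same chain of initial segments $C_\alpha$ with a countable subsequence capped below $\tau$ for the negative half. The only cosmetic difference is that at the last step you invoke monotonicity $(K2)$ to conclude $\rho'(\Yu,C_{\alpha^*})=0$, where the paper instead applies $(K4)$ once more to the countable subchain; both are equally valid.
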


\begin{proof}

Let $\varrho:X\times\RC(X)\to\{0,1\}$ be defined by
\[\rho(x,C)=\begin{cases}
1 &\text{ if } x\not\in C,\\
0 &\text{ if } x\in C.
\end{cases}\]

We claim that $\varrho$ is countable $\varkappa$-metric. Indeed, the function $\varrho$ satisfies conditions (K1) and (K2) of definitions of $\varkappa$-metric.
The function $f_C=\rho(\cdot,C)$ is continuous for any $C\in\RC(X).$ Let $U\subseteq \bR$ be an open set. If $U$ contains $0$ then $f^{-1}(U)=C$ and by Lemma~\ref{co} $C$ is open set. If $U$
contains $1$ then $f^{-1}(U)=X\setminus C$ and this is open set. So, $\varrho$ has the property (K3).
It remains to verify that the function has the property (K4${}_\omega$)
 Let $\{C_n:n\in\omega \}\subseteq \RC(X)$ be an increasing  sequence and $x\in X$. If there exists $n_0\in\omega$ such that $x\in C_{n_0}$ then we get
\[\rho(x,\cl \bigcup_{n\in\omega}C_n)=0=\rho(x, C_{n_0})=\inf\{\rho(x,C_n):n\in\omega\}.\]
Otherwise $x\notin C_n$ for every $n\in\omega$.
If $x\notin\cl\bigcup_{n\in\omega}C_n$ then obviously
$$\varrho(x, \cl\bigcup_{n\in\omega}C_n)=1=\inf\{\varrho(x,C_n)\colon n\in\omega\}.$$
Let us assume, therefore, that $x\in\cl\bigcup_{n\in\omega}C_n$.
Then  $x=\Yu$ and by Lemma~\ref{ult},
$\tau\setminus C_n\in\Yu$
for every $n\in\omega$. Since $\Yu$ is $\sigma$-complete ultrafilter we get $D=\bigcap_{n\in\omega}(\tau\setminus C_n)\in\Yu.$ Hence
\[(D\cup\{\Yu\})\cap\bigcup_{n\in\omega}C_n=\emptyset,\]
a contradiction.

We shall prove that the space $X$ is not   $\varkappa$-metrizable.

  Let $C_\alpha=\alpha$ for
each $\alpha<\tau$. Since $\alpha\notin\Yu$  the set $C_\alpha$ is clopen. Suppose that $\rho'$ is a $\varkappa$-metric on $X$. Since $\Yu\in \cl\bigcup\{C_\alpha:\alpha<\tau\}$ we
get $\rho'(\Yu,\cl\bigcup\{C_\alpha\colon\alpha<\tau\})=0$. Therefore $\inf\{\rho'(\Yu,C_\alpha)\colon\alpha<\tau\}=0$ and there exists an increasing sequence $\{\alpha_n:n\in\omega\}$ such that $\inf\{\rho'(\Yu,C_{\alpha_n})\colon n\in\omega\}=0$. Let $\alpha=\sup\{\alpha_n:n\in\omega \}$ or in other words $C_\alpha=\bigcup\{C_{\alpha_n}\colon n\in\omega\}$. Hence $ 0=\inf\{\rho'(\Yu,C_{\alpha_n})\colon n\in\omega\}=\rho'(\Yu,C_\alpha)>0,$ a contradiction.
\end{proof}

A countable $\varkappa$-metric  $\varrho\colon X\times\RC(X)\to\{0,1\}$  we will call {\em two-valued}.

\begin{prop}
Assume that  $\tau$ is an infinite cardinal and $\Yu$ is a free ultrafilter on $\tau$ and $X=\tau\cup\{\Yu\}\subset\beta\tau.$ If  $\rho\colon X\times\RC(X)\to\{0,1\}$ is a two-valued countable $\varkappa$-metric then
$\Yu$ is $\aleph_1$-complete on $\tau$.\end{prop}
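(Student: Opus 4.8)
The plan is to show that the two-valued $\varkappa$-metric $\rho$ forces the ultrafilter $\Yu$ to be closed under countable intersections, which is precisely the statement that $\Yu$ is $\aleph_1$-complete. First I would recall from Lemma~\ref{co} that $\RC(X)=\CO(X)$, so every member of $\RC(X)$ is clopen; in particular, for any $A\subseteq\tau$, the set $A$ (as a subset of $X$) is clopen precisely when $A\notin\Yu$, and $A\cup\{\Yu\}$ is clopen when $A\in\Yu$. By Lemma~\ref{ult}, a clopen set $C$ contains $\Yu$ if and only if $C\cap\tau\in\Yu$. This gives me a clean dictionary between regular closed sets and subsets of $\tau$ on which to run the argument.

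Next I would take an arbitrary countable family $\{A_n:n\in\omega\}\subseteq\Yu$ and aim to show $\bigcap_{n\in\omega}A_n\in\Yu$. Without loss of generality I may assume the family is decreasing by replacing $A_n$ with $\bigcap_{k\le n}A_k$, which stays in $\Yu$ by finite intersection. Then the complements $C_n=\tau\setminus A_n$ form an increasing sequence of subsets of $\tau$ with $C_n\notin\Yu$, hence each $C_n$ is a clopen (regular closed) subset of $X$ not containing $\Yu$. The key is to evaluate $\rho$ on this chain: since $\Yu\notin C_n$, axiom $(K1)$ gives $\rho(\Yu,C_n)=1$ for every $n$, so $\inf_{n\in\omega}\rho(\Yu,C_n)=1$. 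Applying $(K4_\omega)$ to the increasing chain $\{C_n\}$ yields
\[
\rho\bigl(\Yu,\cl{\textstyle\bigcup_{n\in\omega}}C_n\bigr)=\inf_{n\in\omega}\rho(\Yu,C_n)=1,
\]
so by $(K1)$ again $\Yu\notin\cl\bigcup_{n\in\omega}C_n$. By Lemma~\ref{ult} this means $\bigl(\cl\bigcup_{n}C_n\bigr)\cap\tau=\bigcup_n C_n\notin\Yu$, and since $\Yu$ is an ultrafilter its complement $\tau\setminus\bigcup_n C_n=\bigcap_n A_n$ lies in $\Yu$, which is what I wanted.

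The main thing to be careful about is the application of $(K4_\omega)$: the axiom is stated for chains in $\RC(X)$, so I must confirm that each $C_n$ is genuinely regular closed and that $\cl\bigcup_n C_n$ is the correct regular-closed set to plug in — both follow from Lemma~\ref{co} (everything is clopen) and the fact that a closure of an increasing union of clopen subsets of $\tau$ is again a legitimate element of $\RC(X)$ whose trace on $\tau$ is exactly $\bigcup_n C_n$. I expect no serious obstacle beyond this bookkeeping; the substance of the argument is entirely encoded in the single evaluation of $\rho$ on the chain via $(K4_\omega)$, and the reduction to a decreasing family is routine.
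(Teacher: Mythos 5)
Your proof is correct and follows essentially the same route as the paper's: both arguments reduce to a monotone chain of complements (your $C_n=\tau\setminus A_n$ coincides with the paper's $X\setminus\cl D_n$), use two-valuedness with $(K1)$ to get value $1$ on each member, apply $(K4_\omega)$ to the chain, and translate back via Lemma~\ref{ult} and the ultrafilter property. The only difference is that you argue directly while the paper argues by contradiction, which is purely cosmetic.
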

\begin{proof}
Suppose that there exists $\{D_n\colon n\in\omega\}\subseteq\Yu$ and $\bigcap\{D_n:n\in\omega\}\notin\Yu$. We can assume that $D_{n+1}\subseteq D_n$ for all $n\in\omega.$ Let $E_n=\cl D_n$. Then $E_n\in\CO(X)$ and $\Yu\in E_n$. Since $\rho(\Yu,X\setminus E_n)=1$ for all $n\in\omega$  and  $\cl\bigcup\{X\setminus E_n\colon n\in\omega\}=\cl (X\setminus \bigcap\{E_n\colon n\in\omega\})=\cl (\tau\setminus\bigcap\{D_n\colon n\in\omega\})$  we get $$0=\rho(\Yu,\cl\bigcup\{X\setminus E_n:n\in\omega \})=\inf\{\rho(\Yu,X\setminus E_n):n\in\omega\}=1,$$ a contradiction.
\end{proof}

\begin{uw}
Assume  $\tau$ is the least cardinal that carries a two-valued countable
$\varkappa$-metric on $\tau\cup\{\Yu\}$. Hence
$\tau$ is the least $\aleph_1$-complete cardinal. By
\cite[Lemma 10.2]{jech} $\tau$ is measurable cardinal.
\end{uw}

Since measurable cardinals are large cardinals whose existence cannot be proved from ZFC, it is natural to ask the question:

\begin{question}
	Does there exist in ZFC
a countably $\varkappa$-metrizable space which is not $\varkappa$-metrizable?
\end{question}
Assume that $X$ is a pseudocompact space. Now we show that each $\varkappa$-metrizable pseudocompact space has
a special representation as an inverse  limit. In order to obtain this representation we use some ideas from article~\cite{kp8} and monograph~\cite{hsh}.

 A continuous surjection $f:X \to Y$ is said to be \textit{d-open} if
 $f[U]\subseteq\int\cl f[U]$ for any open set $U\subseteq X$.
The notion of d-open maps was introduced by Tkachenko \cite{tk}. A function
$$f:\bR\times\{0\}\cup\bQ\times\{1\}\to\bR,$$
defined in the following way  $f(x,0)=x$ for any $x\in\bR$ and $f(x,1)=x$ for any $x\in\bQ$ is an example of d-open but not open map. We will use the following  Proposition (see  \cite{tk} or  \cite{kpv}).

\begin{prop}\label{d-open}
 Let $f\colon X \to Y$ be a continuous function, then the following condition are equivalent:
 \begin{enumerate}
    \item $f$ is a d-open map,
	\item there exists a base $\Bee_Y\subseteq \Tee_Y$ such that $\Pee=\{f^{-1}(V): V \in \Bee_Y\} \subseteq_! \Tee_X$,
	i.e. for any $\See\subset \Pee$ and $x\notin \cl_X \bigcup\See$, there exists $W\in\Pee$ such that $x\in W$
and $W\cap\bigcup \See=\emptyset$.
\qed
\end{enumerate}
\end{prop}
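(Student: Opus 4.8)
The plan is to prove the two implications separately, using the surjectivity of $f$ (implicit in the definition of a d-open map) in the harder direction.

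For $(1)\Rightarrow(2)$ I would simply take $\Bee_Y=\Tee_Y$, the whole topology, so that $\Pee=\{f^{-1}(V):V\in\Tee_Y\}$. Given any $\See\subseteq\Pee$, write $\bigcup\See=f^{-1}(O)$, where $O$ is the union of those $V\in\Tee_Y$ with $f^{-1}(V)\in\See$ and is therefore open, and suppose $x\notin\cl_X f^{-1}(O)$. Then there is an open $U\ni x$ with $U\cap f^{-1}(O)=\emptyset$, equivalently $f[U]\cap O=\emptyset$. Put $G=\int\cl f[U]$; since $f[U]\subseteq Y\setminus O$ and $Y\setminus O$ is closed, we get $\cl f[U]\subseteq Y\setminus O$ and hence $G\cap O=\emptyset$. d-openness gives $f(x)\in f[U]\subseteq G$, so $W=f^{-1}(G)\in\Pee$ satisfies $x\in W$ and $W\cap\bigcup\See=f^{-1}(G\cap O)=\emptyset$, which is exactly the separation property $\subseteq_!$.

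For $(2)\Rightarrow(1)$, fix an open set $U\subseteq X$ and a point $x\in U$; the goal is to show $f(x)\in\int\cl f[U]$. I would consider the subfamily
\[
\See=\{f^{-1}(V): V\in\Bee_Y,\ f^{-1}(V)\cap U=\emptyset\}.
\]
Since $U$ is an open neighbourhood of $x$ disjoint from every member of $\See$, we have $x\notin\cl_X\bigcup\See$. The hypothesis $\Pee\subseteq_!\Tee_X$ then yields some $W=f^{-1}(V_0)\in\Pee$ with $x\in W$ (so $f(x)\in V_0$) and $W\cap\bigcup\See=\emptyset$. It then suffices to verify $V_0\subseteq\cl f[U]$: once this is known, $V_0$ is an open neighbourhood of $f(x)$ contained in $\cl f[U]$, whence $f(x)\in\int\cl f[U]$, as desired.

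The main obstacle is precisely this inclusion $V_0\subseteq\cl f[U]$, and it is the one place where surjectivity is essential. Suppose toward a contradiction that some $y_0\in V_0\setminus\cl f[U]$. Since $Y\setminus\cl f[U]$ is open and $\Bee_Y$ is a base, choose $V_1\in\Bee_Y$ with $y_0\in V_1\subseteq Y\setminus\cl f[U]$; then $V_1\cap f[U]=\emptyset$, which forces $f^{-1}(V_1)\cap U=\emptyset$ and hence $f^{-1}(V_1)\in\See$. Consequently $f^{-1}(V_0)\cap f^{-1}(V_1)\subseteq W\cap\bigcup\See=\emptyset$, that is, $f^{-1}(V_0\cap V_1)=\emptyset$. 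But $y_0\in V_0\cap V_1$, so $V_0\cap V_1\neq\emptyset$, and surjectivity of $f$ makes $f^{-1}(V_0\cap V_1)$ nonempty --- a contradiction. This establishes $V_0\subseteq\cl f[U]$ and completes the proof that $f$ is d-open.
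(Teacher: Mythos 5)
Your proof is correct, and in fact the paper itself contains no argument for this proposition: it is stated with a terminal \qed and attributed to Tkachenko \cite{tk} and to \cite{kpv}, so your write-up is a self-contained substitute rather than a variant of an in-paper proof. Both directions check out. In $(1)\Rightarrow(2)$ the choice $\Bee_Y=\Tee_Y$ is legitimate (a topology is a base for itself), the identity $\bigcup\See=f^{-1}(O)$ holds because every member of $\See$ is a full preimage, and d-openness places $f(x)\in f[U]\subseteq G=\int\cl f[U]$, so $W=f^{-1}(G)$ separates as required. In $(2)\Rightarrow(1)$ the reduction to the inclusion $V_0\subseteq\cl f[U]$ is exactly the right move, and your contradiction via a basic set $V_1\subseteq Y\setminus\cl f[U]$ is precisely where the base property of $\Bee_Y$ and the surjectivity of $f$ both enter. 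One point deserves to be made explicit: condition (2) alone cannot imply surjectivity (a constant map into a two-point discrete space satisfies (2) with $\Pee=\{X,\emptyset\}$ but is not onto), so the proposition must be read with ``continuous surjection'' as a standing hypothesis; this is what you assumed, and it matches the paper's definition of d-open, in which surjectivity is built in.
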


\begin{lm}\label{claim7}
	Let $X$ be pseudocompact and $Y$ be a second  countable regular space and let $f:X\to Y$ be a d-open map. Then $f[\cl V]=\cl f[V]$ for any open subset $V\subseteq X$ and $f$ is open map
\end{lm}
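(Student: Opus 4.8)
The plan is to establish the two conclusions in order, proving the equality $f[\cl V]=\cl f[V]$ first and then feeding it into the proof that $f$ is open. Since $Y$ is second countable and regular it is metrizable by the Urysohn metrization theorem, so I fix a compatible metric on $Y$ and write $B(y,\eps)$ for its open balls.

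For the equality, the inclusion $f[\cl V]\subs\cl f[V]$ is immediate from continuity of $f$, so all the content lies in the reverse inclusion. I would take $y\in\cl f[V]$ and a decreasing neighbourhood base $\{B_k\}$ of $y$ chosen so that $\bigcap_k\Cl{B_k}=\{y\}$ (for instance $B_k=B(y,1/k)$). Setting $V_k=V\cap f^{-1}(B_k)$ produces a decreasing sequence of open subsets of $X$, each nonempty because $y\in\cl f[V]$ forces $B_k$ to meet $f[V]$. This is exactly where pseudocompactness enters: as $X$ is Tychonoff, pseudocompactness is equivalent to feeble compactness, so every decreasing sequence of nonempty open sets satisfies $\bigcap_k\Cl{V_k}\nnempty$. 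Choosing $x\in\bigcap_k\Cl{V_k}$ gives $x\in\Cl{V_k}\subs\cl V$ for every $k$, while continuity yields $f(x)\in f[\Cl{V_k}]\subs\cl f[V_k]\subs\Cl{B_k}$ for every $k$; hence $f(x)\in\bigcap_k\Cl{B_k}=\{y\}$, and so $y=f(x)\in f[\cl V]$.

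With the equality available, openness is short. Let $U$ be open and $x\in U$; I want $f(x)\in\int f[U]$. Since $X$ is Tychonoff, hence regular, I can shrink and choose an open set $U'$ with $x\in U'\subs\Cl{U'}\subs U$. Because $f$ is d-open, $f[U']\subs\int\cl f[U']$, and by the equality already proved $\cl f[U']=f[\Cl{U'}]$. Therefore $f(x)\in f[U']\subs\int f[\Cl{U'}]$, and since $\Cl{U'}\subs U$ gives $f[\Cl{U'}]\subs f[U]$, the set $\int f[\Cl{U'}]$ is an open neighbourhood of $f(x)$ contained in $f[U]$. Thus $f(x)\in\int f[U]$, and as $x\in U$ was arbitrary, $f[U]$ is open.

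The main obstacle is the first part: one must correctly invoke the feeble-compactness characterization of pseudocompactness to guarantee $\bigcap_k\Cl{V_k}\nnempty$, and the neighbourhood base must be chosen with $\bigcap_k\Cl{B_k}=\{y\}$ so that the value $f(x)$ is pinned down to $y$ exactly rather than merely to a point near $y$. The openness step is then essentially formal; its only delicate point is the use of regularity of $X$ to replace $U$ by a set $U'$ whose closure still lies inside $U$, which is precisely what lets the d-open inclusion $f[U']\subs\int\cl f[U']$ land inside $f[U]$ instead of only inside $\int\cl f[U]$.
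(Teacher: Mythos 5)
Your proof is correct, and its second half (deducing openness from the closure equality via regularity of $X$ and the d-open inclusion) is exactly the paper's argument, line for line. The first half, however, reaches $\cl f[V]\subseteq f[\cl V]$ by a genuinely different route. The paper argues globally: $\cl V$ is pseudocompact (closures of open sets in a pseudocompact space are pseudocompact, Engelking 3.10.F(d)), its continuous image in the metrizable space $Y$ is pseudocompact and hence compact, so $f[\cl V]$ is a closed set containing $f[V]$, and the inclusion follows at once. You instead argue pointwise: for $y\in\cl f[V]$ you form the decreasing nonempty open sets $V_k=V\cap f^{-1}(B(y,1/k))$, obtain a common cluster point $x\in\bigcap_k\cl V_k$ from the feeble-compactness (decreasing-open-sets) characterization of pseudocompactness for Tychonoff spaces (Engelking 3.10.23), and use $\bigcap_k\cl B(y,1/k)=\{y\}$ to pin down $f(x)=y$. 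Both arguments are sound and both lean on standard Engelking facts; the paper's is shorter and yields the stronger by-product that $f[\cl V]$ is compact, while yours avoids the auxiliary facts that $\cl V$ is pseudocompact and that pseudocompact metrizable spaces are compact, and it isolates exactly where metrizability (really just a countable neighbourhood base at $y$ with $\bigcap_k\cl B_k=\{y\}$) is needed. Neither proof uses d-openness for the closure equality; as in the paper, that hypothesis enters only in the openness step.
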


\begin{proof}
	Let $V\subseteq X$ be an open nonempty set.
	 %By Colmez Theorem
	 It is known (see e.g.~\cite[Ex.~3.10.F(d)]{eng}) that $\cl W$ is pseudocompact for any open nonempty set $W\subseteq X$. Since $Y$ is separable metric space and continuous image of pseudocompact space is compact,
	so the image $f[\cl W]$ is compact subspace for any open subset $W\subseteq X$. Therefore $\cl f[V]=f[\cl V]$ for any open set $V\subseteq X$.
	It remains to prove that
$f$ is open map. To this end, consider an open set $U\subseteq X$ and  $x\in U$. There exists an open neighbourhood $V$ of $x$ such that $x\in V\subseteq\cl V\subseteq U$. Then
\[f(x)\in f[V]\subseteq \int\cl f[V]=\int f[\cl V]\subseteq f[\cl V]\subseteq f[U],\]
this completes the proof.
\end{proof}

Let $\Pee$ be a family of subsets of $X$.
Let define an equivalent relations on $X$. We say that $x\sim_\Pee y$ if and only if $$x\in V\leftrightarrow  y\in V
\mbox{ for  every } V\in\Pee.$$
Denote  by $[x]_{\Pee} $ the class of elements which is equivalent to $x$ with respect to $\sim_\Pee $.
By $X_\Pee$ we will denote a set $\{[x]_\Pee\colon x\in X\}$ and by $q\colon X\to X_\Pee$ a map $q(x)=[x]_\Pee$.
It is clear that $q^{-1}(q(V))=V$ for each $V\in\Pee$.

Let $X$ be a countably $\varkappa$-metrizable space with  countable $\varkappa$-metric $\varrho$.
We say that $\Pee\subseteq \RO(X) $
is \textit{$\Qyu$-admissible} if it satisfies the following conditions:
\begin{itemize}
\item if $V\in\Pee$ then $\Int f^{-1}_{\cl V}((-\infty,q]),
\Int f^{-1}_{\cl V}([q,\infty))\in\Pee$ for all $q\in\Qyu$, where $f_{\cl V}(\cdot )=\rho(\cdot ,\cl V)$,
\item $\Int(X\setminus V)\in\Pee$  for all $V\in\Pee$,
\item $ U\cap V\in\Pee$ and $\int\cl(U\cup V)\in\Pee$ for all $U,V\in\Pee$.
\end{itemize}

%The concept of $\Qyu$-admissible family for 0-dimensional spaces one can find in \cite[p.~64]{hsh}.
Applying inductive argument we can prove the following fact.

\begin{lm}\label{dir}
	For any family $\Aaa\subseteq \RO(X)$ there is a
	$\Qyu$-admissible family $\Pee$ such that $\Aaa\subseteq \Pee$ and $|\Pee|\le \aleph_0\cdot |\Aaa|$.\qed
\end{lm}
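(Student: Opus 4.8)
The plan is to build $\Pee$ as the closure of $\Aaa$ under the three operations appearing in the definition of $\Qyu$-admissibility, carrying out the closure in $\omega$ stages so that the countability of $\Qyu$ keeps the cardinality bounded. For a family $\Gee\subseteq\RO(X)$ set
\[
\Phi(\Gee)=\Gee\cup\Gee^{(1)}\cup\Gee^{(2)}\cup\Gee^{(3)},
\]
where $\Gee^{(1)}=\{\Int f^{-1}_{\cl V}((-\infty,q]),\,\Int f^{-1}_{\cl V}([q,\infty)):V\in\Gee,\ q\in\Qyu\}$, $\Gee^{(2)}=\{\Int(X\setminus V):V\in\Gee\}$ and $\Gee^{(3)}=\{U\cap V,\,\Int\cl(U\cup V):U,V\in\Gee\}$. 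Put $\Pee_0=\Aaa$, $\Pee_{n+1}=\Phi(\Pee_n)$ and $\Pee=\bigcup_{n<\omega}\Pee_n$. Then $\Aaa=\Pee_0\subseteq\Pee$ is immediate, and $\Pee$ is $\Qyu$-admissible: every $V\in\Pee$ lies in some $\Pee_n$, whence the sets it produces under the first two operations lie in $\Pee_{n+1}\subseteq\Pee$, while any two members $U,V\in\Pee$ lie in a common $\Pee_n$, so $U\cap V$ and $\Int\cl(U\cup V)$ lie in $\Pee_{n+1}\subseteq\Pee$.

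For the cardinality bound I would argue by induction that $|\Pee_n|\le\kappa:=\aleph_0\cdot|\Aaa|$ for every $n$ (the case $\Aaa=\emptyset$ being trivial, since then $\Pee=\emptyset$). Assuming $|\Pee_n|\le\kappa$ with $\kappa$ infinite, the operation producing $\Gee^{(1)}$ adds at most $2\cdot|\Qyu|\cdot|\Pee_n|\le\aleph_0\cdot\kappa=\kappa$ sets, $\Gee^{(2)}$ adds at most $|\Pee_n|\le\kappa$, and $\Gee^{(3)}$ adds at most $2\cdot|\Pee_n|^2\le\kappa$; hence $|\Pee_{n+1}|\le\kappa$. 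Taking the union over $n<\omega$ gives $|\Pee|\le\aleph_0\cdot\kappa=\kappa=\aleph_0\cdot|\Aaa|$.

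The one step that is more than bookkeeping is checking $\Pee\subseteq\RO(X)$, i.e. that every set produced by $\Phi$ is regular open; this is what makes the induction $\Pee_n\subseteq\RO(X)$ go through. Since $f_{\cl V}=\rho(\cdot,\cl V)$ is continuous by (K3), the preimages $f_{\cl V}^{-1}((-\infty,q])$ and $f_{\cl V}^{-1}([q,\infty))$ are closed, and the interior of a closed set $F$ is always regular open: if $U=\Int F$ then $\cl U\subseteq F$, so $\Int\cl U\subseteq\Int F=U\subseteq\Int\cl U$, forcing $\Int\cl U=U$. The same remark applies to $\Int(X\setminus V)$. Finally $U\cap V$ and $\Int\cl(U\cup V)$ are precisely the meet and join of the complete Boolean algebra $\RO(X)$ and are therefore regular open. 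With these facts the induction closes and the lemma follows.
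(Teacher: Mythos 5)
Your proof is correct and is essentially the paper's own approach: the paper states this lemma with only the remark that it follows by ``applying inductive argument,'' and your $\omega$-stage closure construction, together with the verification that all four operations (interiors of closed preimages under the continuous maps $f_{\cl V}$, complements, meets, and joins) stay inside $\RO(X)$, supplies exactly the omitted details. The cardinality bookkeeping and your handling of the degenerate case $\Aaa=\emptyset$ are both sound, so there is nothing to fix.
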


\begin{lm}\label{c1}
	If $\Pee$ is a  $\Qyu$-admissible family, then for each $V\in\Pee$ there exists a sequence of increasing regular open sets $\{V_n: n\in\omega\}\subseteq\Pee$ such that
	\begin{equation*}
	V=\bigcup_{n\in\omega} V_n \text{ and }V_n\subseteq \cl V_n\subseteq V_{n+1}\subseteq V\text{ for each }n\in\omega.\tag{$*$}
	\end{equation*}
\end{lm}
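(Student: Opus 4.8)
The plan is to realize the sequence $\{V_n\}$ as the interiors of superlevel sets of a single $\varkappa$-metric function attached to the closed complement of $V$, and to obtain the strict nesting by separating the thresholds.

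First I would pass to the complement of $V$. Since $V\in\Pee\subseteq\RO(X)$, the set $X\setminus V$ is regular closed, so the second clause of $\Qyu$-admissibility gives $W\defi\Int(X\setminus V)\in\Pee$, and regularity of $V$ yields $\cl W=\cl\Int(X\setminus V)=X\setminus V$. Consider the continuous function $f_{\cl W}=\rho(\cdot,\cl W)=\rho(\cdot,X\setminus V)$, continuous by $(K3)$. By $(K1)$ we have $f_{\cl W}(x)=0$ exactly when $x\in\cl W=X\setminus V$, so that
$$V=\setof{x\in X}{f_{\cl W}(x)>0}.$$

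Next I would fix a strictly decreasing sequence of positive rationals $q_n\downarrow 0$ and set
$$V_n\defi\Int f^{-1}_{\cl W}\bigl([q_n,\infty)\bigr).$$
Applying the first clause of $\Qyu$-admissibility to $W\in\Pee$ shows $V_n\in\Pee$, and each $V_n$ is automatically regular open, being the interior of the closed set $F_{q_n}\defi\setof{x}{f_{\cl W}(x)\ge q_n}$. Monotonicity $V_n\subseteq V_{n+1}$ is immediate from $q_n>q_{n+1}$, which gives $F_{q_n}\subseteq F_{q_{n+1}}$ and hence $\Int F_{q_n}\subseteq\Int F_{q_{n+1}}$. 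Moreover $V_n\subseteq F_{q_n}\subseteq V$ since $q_n>0$ forces $f_{\cl W}\ge q_n>0$ on $F_{q_n}$, while conversely every $x\in V$ satisfies $f_{\cl W}(x)>q_n$ for all large $n$; this proves $V=\bigcup_{n\in\omega}V_n$.

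The delicate step, which I expect to be the crux, is the strict nesting $\cl V_n\subseteq V_{n+1}$. Here I would use the strict drop between consecutive thresholds. Writing $U_{q}\defi\setof{x}{f_{\cl W}(x)>q}$, we have $\cl V_n=\cl\Int F_{q_n}\subseteq F_{q_n}$ because $F_{q_n}$ is closed; then $q_n>q_{n+1}$ gives $F_{q_n}\subseteq U_{q_{n+1}}$, and since $U_{q_{n+1}}$ is open and contained in $F_{q_{n+1}}$ it lies inside $\Int F_{q_{n+1}}=V_{n+1}$. Chaining these inclusions,
$$\cl V_n\subseteq F_{q_n}\subseteq U_{q_{n+1}}\subseteq V_{n+1}\subseteq V,$$
which is precisely $(*)$. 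The whole argument turns on choosing the levels strictly decreasing, so that the closed superlevel set at level $q_n$ is swallowed by the open superlevel set at the smaller level $q_{n+1}$; this is what converts the mere continuity of $\rho(\cdot,X\setminus V)$ into the required separation of closures, and it is the only point where more than formal manipulation is needed.
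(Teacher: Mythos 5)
Your proof is correct and follows essentially the same route as the paper: the paper also sets $V_n=\int f^{-1}([\tfrac{1}{n+1},\infty))$ for $f=\rho(\cdot,X\setminus V)$, gets $V_n\in\Pee$ from the two admissibility clauses exactly as you do, and leaves the verification of $(*)$ as "easy to see." Your write-up merely makes explicit (via the strict drop $q_n>q_{n+1}$ and the inclusion $\cl\Int F_{q_n}\subseteq F_{q_n}\subseteq U_{q_{n+1}}\subseteq V_{n+1}$) the details the paper omits.
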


\begin{proof}
	%Let $\rho\colon X\times\RC(X)\to[0,\infty)$ be a $\varkappa$-metric on space $X$.
	Define
	$V_n=\int f^{-1}([\frac {1}{ n+1},\infty])$ where $f(x)=\rho(x, X\setminus V)$.
	Since $X\setminus \cl V=\int(X\setminus V)\in \Pee$ then $V_n\in\Pee$.
	It is easy to see that
	$V=\bigcup\limits_{n\in\omega} V_n$ and $V_n\subseteq \cl V_n\subseteq V_{n+1}\subseteq V$.
\end{proof}

\begin{lm}\label{c2}
 If $\Pee$ is a $\Qyu$-admissible family , then
 $[x]_{\Pee} =\bigcap\{\cl V: x\in V\in\Pee\}$.
\end{lm}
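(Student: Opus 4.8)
The plan is to establish the two inclusions separately; the inclusion $[x]_\Pee\subseteq\bigcap\{\cl V:x\in V\in\Pee\}$ is immediate from the definitions, while the reverse inclusion carries all the content and is where $\Qyu$-admissibility, through the chain decomposition of Lemma~\ref{c1}, is used.

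For the easy inclusion I would simply unwind the definition of $\sim_\Pee$: if $y\in[x]_\Pee$ and $V\in\Pee$ satisfies $x\in V$, then by definition $y\in V\subseteq\cl V$; letting $V$ range over all members of $\Pee$ containing $x$ places $y$ in the intersection. This uses nothing beyond the definition of the equivalence relation.

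For the reverse inclusion fix $y\in\bigcap\{\cl V:x\in V\in\Pee\}$; I must verify that $x\in V\iff y\in V$ for every $V\in\Pee$. Suppose first that $x\in V$. Lemma~\ref{c1} supplies $\{V_n:n\in\omega\}\subseteq\Pee$ with $V=\bigcup_{n}V_n$ and $\cl V_n\subseteq V_{n+1}\subseteq V$; choosing $n$ with $x\in V_n$ and applying the hypothesis to $V_n\in\Pee$ yields $y\in\cl V_n\subseteq V_{n+1}\subseteq V$, so $y\in V$. For the converse, suppose $y\in V$ but, toward a contradiction, $x\notin V$. Again write $V=\bigcup_{n}V_n$ as above and pick $n$ with $y\in V_n$. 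Since $\cl V_n\subseteq V$ and $x\notin V$, we get $x\notin\cl V_n$, that is $x\in\Int(X\setminus V_n)$, and the complement clause of $\Qyu$-admissibility guarantees $\Int(X\setminus V_n)\in\Pee$. As $x$ lies in this member of $\Pee$, the choice of $y$ forces $y\in\cl\Int(X\setminus V_n)=\cl(X\setminus\cl V_n)$. But $V_n$ is an open neighbourhood of $y$ disjoint from $X\setminus\cl V_n$, so $y\notin\cl(X\setminus\cl V_n)$, a contradiction. Hence $x\in V$, which completes the equivalence and thus the inclusion.

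The main obstacle is precisely this converse step. Unlike the first part, it cannot be read off from mere containment and requires both the approximating chain $\{V_n\}$ furnished by Lemma~\ref{c1} and the closure of $\Pee$ under $V\mapsto\Int(X\setminus V)$. The contradiction is engineered so that $x$ and $y$ are separated by a single admissible set, which an arbitrary $y$ chosen from the intersection $\bigcap\{\cl V:x\in V\in\Pee\}$ cannot tolerate.
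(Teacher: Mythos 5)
Your proof is correct and follows essentially the same route as the paper: the easy inclusion from the definition of $\sim_\Pee$, and the reverse inclusion via the chain $\{V_n\}$ from Lemma~\ref{c1} together with closure of $\Pee$ under $V\mapsto\Int(X\setminus V)$, split into the same two cases. The only cosmetic difference is in the final contradiction of the second case: the paper uses regular openness of $V_n$ to write $\cl\Int(X\setminus V_n)=X\setminus V_n$, whereas you observe directly that $V_n$ is an open neighbourhood of $y$ missing $X\setminus\cl V_n$; both are immediate.
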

\begin{proof}
Obviously $[x]_{\Pee}\subseteq\bigcap\{\cl V\colon x\in V\in\Pee\}$.
Let
 $a\in \bigcap\{\cl V\colon x\in V\in\Pee\}$. Suppose that there exists $V\in\Pee$ such that $x\in V$ and $a\not\in V$.
By lemma~\ref{c1} we have $\{V_n\colon n\in\omega\}\subseteq\Pee$ that satisfies $(*)$. There exists $n\in\omega$ such that
$x\in V_n$. But  $a\in \cl V_n\subseteq V_{n+1}\subseteq V$; a contradiction.
Now suppose that $a\in W$ and $x\notin W$, where $W\in\Pee$.
Let $\{W_n\colon n\in\omega\}\subseteq\Pee$ satisfies $(*)$ for $W$.
There is $n\in\omega$ such that $a\in W_n$. On the other hand
 $x\in X\setminus \cl W_n=\int (X\setminus W_n)\in\Pee$,
and $a\in \cl\int(X\setminus W_n)=
X\setminus W_n$, a contradiction with $a\in  W_n$.
\end{proof}

\begin{lm}\label{c3}
 If $\Pee$ is a countable $\Qyu$-admissible  and $x\sim_\Pee y$, then
 $\rho(x,\cl\bigcup\Aaa)=\rho(y,\cl\bigcup\Aaa)$ for any $\Aaa\subseteq \Pee$.
\end{lm}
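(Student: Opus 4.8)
The plan is to reduce the assertion to the single-set statement that $\rho(x,\cl V)=\rho(y,\cl V)$ for every $V\in\Pee$, and then to prove that special case from $\Qyu$-admissibility together with the continuity axiom (K3). For the reduction I would use that, since $\Pee$ is countable, the subfamily $\Aaa$ is countable, so I may enumerate $\Aaa=\{A_n:n\in\omega\}$ (repeating entries if $\Aaa$ is finite). Setting $B_n=\int\cl(A_0\cup\cdots\cup A_n)$, the third clause of $\Qyu$-admissibility gives $B_n\in\Pee$, and the $B_n$ form an increasing chain of regular open sets with $\cl\bigcup_{n\in\omega}B_n=\cl\bigcup\Aaa$, the inclusions $A_0\cup\cdots\cup A_n\subseteq B_n\subseteq\cl(A_0\cup\cdots\cup A_n)$ squeezing the two closures together. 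Passing to $C_n=\cl B_n$, an increasing chain in $\RC(X)$ with $\cl\bigcup_n C_n=\cl\bigcup\Aaa$, axiom (K4${}_\omega$) will yield $\rho(z,\cl\bigcup\Aaa)=\inf_n\rho(z,\cl B_n)$ for $z\in\{x,y\}$. Hence, once the single-set case is established for each $B_n\in\Pee$, the two infima agree and the lemma follows.

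For the single-set case I would fix $V\in\Pee$ and write $f=f_{\cl V}=\rho(\cdot,\cl V)$, which is continuous by (K3). Suppose toward a contradiction that $\rho(x,\cl V)\neq\rho(y,\cl V)$, say $\rho(x,\cl V)<\rho(y,\cl V)$, and choose a rational $q$ strictly between them. By the first clause of $\Qyu$-admissibility the set $U=\Int f^{-1}((-\infty,q])$ belongs to $\Pee$. Since $f(x)<q$, the point $x$ lies in the open set $f^{-1}((-\infty,q))\subseteq U$, so $x\in U$; on the other hand $U\subseteq f^{-1}((-\infty,q])$ while $f(y)>q$, so $y\notin U$. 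This separates $x$ from $y$ by a member of $\Pee$, contradicting $x\sim_\Pee y$, and therefore $\rho(x,\cl V)=\rho(y,\cl V)$.

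I expect the only real care to be needed in the reduction step: one must check that the finite-union operation $\int\cl(\,\cdot\,\cup\,\cdot\,)$ keeps the approximating sets inside $\Pee$ and that the resulting chain has exactly the closure $\cl\bigcup\Aaa$, so that (K4${}_\omega$) applies verbatim. The countability of $\Pee$ is essential here, since it is precisely what allows $\Aaa$ to be arranged as an $\omega$-chain; for an uncountable family (K4${}_\omega$) would be unavailable. The single-set case, by contrast, is immediate once the rational threshold set $U$ is produced, the contradiction coming straight from the definition of $\sim_\Pee$.
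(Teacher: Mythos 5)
Your proof is correct and takes essentially the same route as the paper's: the paper also reduces to finite unions (using the clause $\int\cl(U\cup V)\in\Pee$ of $\Qyu$-admissibility to get $V=\int\cl\bigcup\Aaa_n\in\Pee$ with $\cl V=\cl\bigcup\Aaa_n$), separates $x$ from $y$ by a rational threshold set from the first clause, and then exhausts a countably infinite $\Aaa$ by an increasing sequence of finite subfamilies and applies $(K4_\omega)$, exactly as you do with your sets $B_n$. The only differences are cosmetic: you use $\Int f^{-1}\bigl((-\infty,q]\bigr)$ where the paper uses $\Int f^{-1}\bigl([q,\infty)\bigr)$, and your strictly-between choice of $q$ is in fact slightly tidier than the paper's non-strict inequality.
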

\begin{proof}Let $x,y\in X$ and $\Aaa\subseteq \Pee$.

Assume first that $\Aaa\subseteq \Pee$ is finite. Suppose that $\rho(x,\cl\bigcup\Aaa)>\rho(y,\cl\bigcup\Aaa)$.
There exists $q\in\Qyu$ such that $\rho(x,\cl\bigcup\Aaa)\ge q>\rho(y,\cl\bigcup\Aaa)$. Since $\Pee$ is $\Qyu$-admissible,
$V=\int\cl\bigcup\Aaa\in\Pee$ and $\cl V=\cl\bigcup\Aaa$. For the map $f_{\cl V}(\cdot)=\rho(\cdot,\cl V)$, we get
$x\in\int f^{-1}_{\cl V}([q,\infty))\in\Pee$ and $y\not\in\int f^{-1}_{\cl V}([q,\infty))$, a contradiction with $x\sim_\Pee y$.

Assume now that $\Aaa$ is countable and  infinite. We decompose $\Aaa$ into the sum of a strictly increasing sequence of families $\Aaa_n\subseteq\Aaa$
of strictly increasing cardinalities, $\Aaa=\bigcup\{\Aaa_n:n\in\omega\}$. Then
$$\rho(x,\cl\bigcup\Aaa)=\inf\{
\rho(x,\cl\bigcup\Aaa_n)\colon n\in\omega\}=\inf\{\rho(y,\cl\bigcup\Aaa_n)\colon n\in\omega\}=\rho(y,\cl\bigcup\Aaa)$$ by condition $(K4_\omega)$.
\end{proof}

Let $X$ be a a pseudocompact countably $\varkappa$-metrizable space and $\Pee$ be a $\Qyu$-admissible family. 
The set $X_\Pee=\{[x]_\Pee\colon x\in X\}$ is equipped with the topology $\Tee_\Pee$ generated by all images $q[V], V \in \Pee.$ Since $\Pee$ is $\Qyu$-admissible it is closed under finite intersection and $X =\bigcup\Pee.$

\begin{lm}[{\cite[Lemma 1]{kp8}}]\label{l1}
The mapping $q : X\to X_\Pee$ is continuous provided $\Pee$ is an open family $X$ which is closed under finite intersection. Moreover, if $X =\bigcup\Pee,$ then the family $\{ q [V] : V \in\Pee\}$ is a base for the topology $\Tee_\Pee$.
\end{lm}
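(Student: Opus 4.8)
The plan is to derive both assertions from the single identity $q^{-1}(q[V])=V$, valid for every $V\in\Pee$ as already observed, supplemented by the observation that $q$ carries finite intersections of members of $\Pee$ to intersections of their images.

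First I would prove continuity. Since $\Tee_\Pee$ is by definition the topology generated by the family $\{q[V]:V\in\Pee\}$ taken as a subbase, it suffices to pull back the subbasic open sets: for each $V\in\Pee$ we have $q^{-1}(q[V])=V$, and this set is open because $\Pee$ is a family of open subsets of $X$. Thus the preimage of every subbasic open set is open, and $q$ is continuous.

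Next, under the extra hypothesis $X=\bigcup\Pee$, I would show that the subbase $\{q[V]:V\in\Pee\}$ is in fact a base. Here I would invoke the standard criterion that a subbase which covers the space and is stable under finite intersections is automatically a base. Covering is immediate from $X=\bigcup\Pee$: every class $[x]_\Pee$ has a representative lying in some $V\in\Pee$, so $[x]_\Pee\in q[V]$. For stability under finite intersections I would establish
\[q[U]\cap q[V]=q[U\cap V]\qquad\text{for all }U,V\in\Pee.\]
The inclusion $\supseteq$ is immediate, since $U\cap V$ is contained in both $U$ and $V$. For the inclusion $\subseteq$, a class in $q[U]\cap q[V]$ has representatives — a priori distinct — in $U$ and in $V$; but membership in a fixed member of $\Pee$ is constant on $\sim_\Pee$-classes, so any single representative $x$ already lies in both $U$ and $V$, hence in $U\cap V$, giving $[x]_\Pee\in q[U\cap V]$. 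Since $\Pee$ is assumed closed under finite intersections, the image family inherits this closure, and the criterion applies.

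The one step that is not purely formal is this last identity $q[U]\cap q[V]=q[U\cap V]$, where one must move between distinct representatives of the same equivalence class; this is precisely the point at which the definition of $\sim_\Pee$ does the work, and it is the heart of the argument. The continuity half and the covering condition are entirely routine.
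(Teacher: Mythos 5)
Your proof is correct. The paper does not actually prove this lemma (it is imported verbatim from \cite[Lemma 1]{kp8}), but your argument --- continuity checked on the subbasic sets via the identity $q^{-1}(q[V])=V$ for $V\in\Pee$, and then the identity $q[U]\cap q[V]=q[U\cap V]$ (which is exactly where the definition of $\sim_\Pee$ is used) together with the covering condition $X=\bigcup\Pee$ to promote the subbase to a base --- is precisely the standard argument behind the cited lemma, so there is nothing to fix.
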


 To show that if $X$  is pseudocompact countably $\varkappa$-metrizable space, then  $X_\Pee$ is Tychonoff space, we apply the following Frink's theorem, see \cite{fri}. 

\bigskip

\textbf{Theorem} [O. Frink (1964)]. \textit{ A $T_1$-space $X$ is Tychonoff if and only if there exists  a base $\Bee$ satisfying}:\\
\indent (1) \textit{If  $x\in U\in \Bee$, then there exists  $V\in \Bee$ such that $x\not\in V$ and $U\cup V = X$};\\
\indent (2) \textit{ If   $U,V\in \Bee$ and
 $U\cup V= X$, then there exists
disjoint sets $M,N \in \Bee$ such that $X\setminus U\subseteq M$ and   $X\setminus V\subseteq N$}.  \qed

\bigskip

%It's similar  like in \cite{kp8}, but now we assume that $\Pee$ has some other property(???).

\begin{tw}\label{t5} Let $X$ be a pseudocompact countably $\varkappa$-metrizable space and $\Pee $ be countable  $\Qyu$-admissible  family. Then the $X_{\Pee}$ is  compact and metrizable.
\end{tw}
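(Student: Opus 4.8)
The plan is to show that $X_\Pee$ is a second countable Hausdorff regular space—hence metrizable by the Urysohn metrization theorem—and then to deduce compactness from the pseudocompactness of $X$. Since $\Pee$ is $\Qyu$-admissible, it is an open family closed under finite intersections with $X=\bigcup\Pee$, so Lemma~\ref{l1} applies: the map $q\colon X\to X_\Pee$ is continuous and $\{q[V]\colon V\in\Pee\}$ is a base for $\Tee_\Pee$. As $\Pee$ is countable, this base is countable and $X_\Pee$ is second countable. Throughout I will use the identity $q^{-1}(q[V])=V$ for $V\in\Pee$, which gives $q^{-1}(q[U]\cap q[V])=U\cap V$ and hence the equivalence $q[U]\cap q[V]\ne\emptyset \iff U\cap V\ne\emptyset$ for $U,V\in\Pee$.

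The heart of the argument is separation, and here Lemma~\ref{c1} does the work. Given distinct classes $[x]_\Pee\ne[y]_\Pee$, there is $V\in\Pee$ containing exactly one of $x,y$, say $x\in V$ and $y\notin V$. Choose $\{V_n\}\subseteq\Pee$ with $V=\bigcup_n V_n$ and $\cl V_n\subseteq V_{n+1}\subseteq V$, and fix $n$ with $x\in V_n$. Admissibility yields $W:=\Int(X\setminus V_n)=X\setminus\cl V_n\in\Pee$; since $y\notin V\supseteq\cl V_n$ we get $y\in W$, while $V_n\cap W=\emptyset$. Thus $q[V_n]$ and $q[W]$ are disjoint open sets separating $[x]_\Pee$ and $[y]_\Pee$, so $X_\Pee$ is Hausdorff. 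The same $V_n$ gives regularity: I claim $\cl_{X_\Pee}q[V_n]\subseteq q[V]$. Indeed, if $[z]_\Pee\notin q[V]$ then $z\notin V$, so $z\notin\cl V_n$ and $z\in W\in\Pee$; as $W\cap V_n=\emptyset$, the basic neighborhood $q[W]$ of $[z]_\Pee$ misses $q[V_n]$, whence $[z]_\Pee\notin\cl_{X_\Pee}q[V_n]$. Since every basic neighborhood $q[V]$ of a point contains such a $q[V_n]$ whose closure lies inside $q[V]$, the space $X_\Pee$ is regular. (Alternatively one could feed the base into Frink's theorem, as the excerpt sets up, to obtain the Tychonoff property.) Being second countable, Hausdorff and regular, $X_\Pee$ is metrizable.

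For compactness, recall that $q$ is a continuous surjection and $X$ is pseudocompact, so $X_\Pee$ is pseudocompact: any continuous $g\colon X_\Pee\to\bR$ pulls back to the bounded function $g\cmp q$ on $X$, whence $g$ is bounded. A pseudocompact metrizable space is compact, since pseudocompactness, countable compactness and compactness coincide for metric spaces. Therefore $X_\Pee$ is compact and metrizable, as required.

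I expect the regularity/separation step to be the main obstacle, because it requires transferring closures computed in the quotient $X_\Pee$ back to membership in members of $\Pee$ inside $X$. The identity $q^{-1}(q[V])=V$, combined with the nested sequences of Lemma~\ref{c1} and the closure of $\Pee$ under $V\mapsto\Int(X\setminus V)$, is precisely what makes this transfer go through; the second countability and the metrizable-plus-pseudocompact argument for compactness are then routine.
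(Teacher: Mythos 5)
Your proof is correct, but it takes a different route through the separation step than the paper does. The paper proves $X_\Pee$ is $T_1$ and then verifies the two conditions of Frink's base characterization of Tychonoff spaces, with condition (2) requiring pseudocompactness: given $U\cup V=X$ one needs some $n$ with $(X\setminus\cl U_n)\cap(X\setminus\cl V_n)=\emptyset$, and this is exactly where pseudocompactness of $X$ is invoked. You instead verify Hausdorffness and regularity directly, using the nested sequences of Lemma~\ref{c1} together with the admissibility operation $V\mapsto\Int(X\setminus V)$ and the identity $q^{-1}(q[V])=V$; this bypasses Frink's theorem entirely. Your route has a small structural advantage worth noting: pseudocompactness enters your argument only once, in the final step (continuous image of pseudocompact is pseudocompact, and pseudocompact metrizable spaces are compact), so your argument in fact shows that $X_\Pee$ is second countable and metrizable for \emph{any} countably $\varkappa$-metrizable $X$ with a countable $\Qyu$-admissible family, pseudocompact or not, with pseudocompactness needed only to upgrade metrizable to compact metrizable. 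The paper's route, by contrast, uses pseudocompactness twice (in Frink's condition (2) and in the compactness step), but has the virtue of following a standard template for producing Tychonoff quotients from bases closed under complementation. Both proofs rest on the same key lemma (Lemma~\ref{c1}) and end with the same Urysohn-plus-pseudocompactness conclusion.
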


\begin{proof}
First we shall prove that $X_{\Pee}$ is $T_1$-space. Let $[x]_\Pee\ne[y]_\Pee$. Then there exists $V\in\Pee$ such that
$x\in V$ and $y\not\in V$.
By virtue of Claim~\ref{c1} there is a family $\{V_n:n\in\omega\}\subseteq\Pee$ which satisfies condition $(*)$ for set $V$.
So there is $n\in\omega$ such that $x\in V_n$.
Since
$y\in X\setminus \cl V_n\in\Pee$ then $[y]_\Pee\in q[X\setminus \cl V_n]$ but $[x]_\Pee\notin q[X\setminus\cl V_n]$.

We shall prove that $X_\Pee$ satisfies  condition $(1)$ of Frink's theorem with a base $\Bee=\{q[V]:V\in \Pee\}$.
Fix $[x]_\Pee\in U=q[V]$ where $V\in\Pee$. Since $V=\bigcup\{V_n:n\in\omega \}$, there exists $V_n\in\Pee$ such that
$x\in V_n\subseteq\cl V_n\subseteq V$. Therefore $x\not\in X\setminus \cl V_n\in\Pee$ and $V\cup (X\setminus \cl V_n)=X$. So, $X_\Pee=q[V]\cup q[X\setminus V_n]=U\cup q[X\setminus V_n]$.

 Let prove condition $(2)$. Fix $U, V\in\Pee$ such that $U\cup V=X$. By Claim~ \ref{c1} there are $\{ V_n: n\in\omega\}\subseteq \Pee$ and $\{U_n:n\in\omega \}\subseteq \Pee$ such that  $V=\bigcup\{V_n:n\in\omega \}$ and
  $U=\bigcup\{U_n:n\in\omega \}$ and $V_n\subseteq \cl V_{n+1}$
 and $U_n\subseteq \cl U_{n+1}$. Since $(X\setminus U)\cap(X\setminus V)=\emptyset$ and $X$ is pseudocompact there exists
 $n\in\omega $ such that $(X\setminus \cl U_n)\cap(X\setminus \cl V_n)=\emptyset$ and
 $X\setminus U\subseteq X\setminus \cl U_n\in\Pee$ and $X\setminus V\subseteq X\setminus \cl V_n\in\Pee$

By the Frink's theorem $X$ is Tychonoff. So, $X$ is metrizable by Urysohn Metrization theorem. Since $X_\Pee$ is continuous image of pseudocompact space $X$ then $X$ is compact.
 \end{proof}

\begin{lm}\label{factor}
 Let $X$ be a countably $\varkappa$-metrizable pseudocompact space and $\Aaa$ be a countable family of regular open sets.
 There exists a d-open map $f\colon X\to Y$ onto a compact metrizable space with a countable base $\Bee$ such that $\Aaa\subseteq
  f^{-1}(\Bee)$

\end{lm}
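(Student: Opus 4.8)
The plan is to take $Y$ to be a quotient of $X$ by a countable admissible family containing $\Aaa$, to recognize $f$ as the quotient map, and to read off all the required properties from the earlier lemmas; the only genuine work is the verification of $d$-openness through Proposition~\ref{d-open}. First I would apply Lemma~\ref{dir} to enlarge $\Aaa$ to a $\Qyu$-admissible family $\Pee$ with $\Aaa\subseteq\Pee$ and $|\Pee|\le\aleph_0\cdot|\Aaa|=\aleph_0$, so $\Pee$ is countable. Set $Y:=X_\Pee$ and $f:=q\colon X\to X_\Pee$. By Lemma~\ref{l1} the map $q$ is continuous and $\Bee:=\{q[V]:V\in\Pee\}$ is a countable base of $\Tee_\Pee$, while Theorem~\ref{t5} gives that $Y$ is compact and metrizable. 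Since $q^{-1}(q[V])=V$ for every $V\in\Pee$, each $A\in\Aaa\subseteq\Pee$ satisfies $A=f^{-1}(q[A])$ with $q[A]\in\Bee$, hence $\Aaa\subseteq f^{-1}(\Bee)$. Thus every requirement except $d$-openness is immediate.

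To obtain $d$-openness I would invoke the implication (2)$\Rightarrow$(1) of Proposition~\ref{d-open} with the base $\Bee_Y:=\Bee$: since $\{f^{-1}(q[V]):V\in\Pee\}=\{q^{-1}(q[V]):V\in\Pee\}=\Pee$, it remains only to check that $\Pee\subseteq_!\Tee_X$. So fix $\See\subseteq\Pee$ (automatically countable, as $\Pee$ is) and a point $x\notin\cl\bigcup\See$. Using that $\Pee$ is closed under $\Int\cl(U\cup V)$, I would replace $\See$ by the increasing sequence $T_n:=\Int\cl(S_0\cup\dots\cup S_n)\in\Pee$; one checks $\cl\bigcup_nT_n=\cl\bigcup\See=:C$, so $x\notin C$, and since $W$ open and disjoint from $\bigcup_nT_n$ is equivalent to $W\cap C=\emptyset$, it suffices to produce $W\in\Pee$ with $x\in W$ and $W\cap\bigcup_nT_n=\emptyset$.

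Put $r:=\rho(x,C)>0$ by (K1), fix a rational $0<q<r$, and define $V_n:=\Int f^{-1}_{\cl T_n}([q,\infty))\in\Pee$ using admissibility. Continuity of $f_{\cl T_n}$ (axiom (K3)) gives $x\in V_n$; the $V_n$ are decreasing because $\cl T_n$ increases; and $T_m\cap V_n=\emptyset$ for all $m\le n$, since every point of $\cl T_n\supseteq T_m$ has $\rho(\cdot,\cl T_n)=0<q$. I claim that some $V_{n_0}$ is the desired $W$.

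The heart of the argument, and the step I expect to be the main obstacle, is to rule out that every $V_n$ meets $\bigcup_mT_m$, and this is exactly where pseudocompactness is used. Suppose, towards a contradiction, that each $B_n:=V_n\cap\bigcup_mT_m$ is nonempty. Then $\{B_n\}$ is a decreasing sequence of nonempty open sets, so by feeble compactness of the pseudocompact (Tychonoff) space $X$ there is a point $b\in\bigcap_n\cl B_n$. On the one hand $b\in\cl B_n\subseteq\cl V_n\subseteq f^{-1}_{\cl T_n}([q,\infty))$, because the closure of the interior of the closed set $f^{-1}_{\cl T_n}([q,\infty))$ lies inside it; hence $\rho(b,\cl T_n)\ge q$ for all $n$, and therefore $\rho(b,C)=\inf_n\rho(b,\cl T_n)\ge q>0$ by (K4${}_\omega$), so $b\notin C$. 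On the other hand $b\in\cl B_n\subseteq\cl\bigcup_mT_m=C$, a contradiction. Consequently some $V_{n_0}\cap\bigcup_mT_m=\emptyset$, and $W:=V_{n_0}$ works, which establishes $\Pee\subseteq_!\Tee_X$ and, via Proposition~\ref{d-open}, the $d$-openness of $f$. I anticipate that the only delicate points are the reduction of an arbitrary countable $\See$ to an increasing chain (so that (K4${}_\omega$) applies) and the feeble-compactness characterization of pseudocompactness; everything else is bookkeeping with the admissibility axioms.
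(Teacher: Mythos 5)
Your proposal is correct, and its skeleton is the paper's: enlarge $\Aaa$ to a countable $\Qyu$-admissible $\Pee$ via Lemma~\ref{dir}, take $Y=X_\Pee$, $f=q$, get continuity and the countable base from Lemma~\ref{l1}, compactness and metrizability from Theorem~\ref{t5}, and reduce d-openness to condition (2) of Proposition~\ref{d-open}. Where you genuinely diverge is in verifying $\Pee\subseteq_!\Tee_X$. The paper routes this through Lemma~\ref{c3} (the countable $\varkappa$-metric is $\sim_\Pee$-invariant on sets $\cl\bigcup\Aaa$, $\Aaa\subseteq\Pee$) and Lemma~\ref{c2} ($[x]_\Pee=\bigcap\{\cl V\colon x\in V\in\Pee\}$): from $\rho(x,\cl\bigcup\See)>0$ it gets $[x]_\Pee\cap\cl\bigcup\See=\emptyset$ and then invokes pseudocompactness to extract finitely many $V_1,\dots,V_n\in\Pee$ containing $x$ with $V_1\cap\dots\cap V_n\cap\bigcup\See=\emptyset$ --- a step the paper states tersely, and which is justified by exactly the feeble-compactness cluster-point argument you spell out. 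You bypass Lemmas~\ref{c2} and~\ref{c3} entirely: you replace $\See$ by the increasing chain $T_n=\int\cl(S_0\cup\dots\cup S_n)\in\Pee$, form the admissibility sets $V_n=\int f^{-1}_{\cl T_n}([q,\infty))$, and get the contradiction from feeble compactness plus an explicit application of $(K4_\omega)$ to the chain $\{\cl T_n\}$. So both proofs ultimately rest on the same two ingredients (pseudocompactness as feeble compactness, and $(K4_\omega)$ on an increasing chain), but the paper's packaging hides $(K4_\omega)$ inside Lemma~\ref{c3} and reuses machinery needed anyway for Theorem~\ref{open-rep}, which buys brevity; your version buys self-containment, makes the pseudocompactness step explicit rather than implicit, and produces a single admissible witness $W=V_{n_0}$ instead of a finite intersection. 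One small repair: to quote the ``moreover'' clause of Lemma~\ref{l1} (that $\Bee$ is a base) you need $X=\bigcup\Pee$; the paper secures this by demanding $\Aaa\cup\{X\}\subseteq\Pee$, and you should either do the same or note that any nonempty admissible family automatically contains $X$, since $V\cup\int(X\setminus V)$ is dense and hence $\int\cl\bigl(V\cup\int(X\setminus V)\bigr)=X$ lies in $\Pee$.
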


\begin{proof}
Let $\Aaa$ be a countable family of regular open sets  and let $\Pee\subseteq \RO(X)$ be a  countable $\Qyu$-admissible family  such that $\Aaa\cup\{X\}\subseteq \Pee$. Let $Y=X_\Pee$ and $f=q\colon X\to X_\Pee$.

By Lemma \ref{l1} the function $f$ is continuous and
$\Aaa\subseteq f^{-1}(\Bee)$ for the base $\Bee=\{f[V]\colon V\in\Pee\}$
of $Y$.
By
Lemma~\ref{t5}
the space $Y$ is compact and metrizable.

 It remains to show that $f$ is d-open. By
 Proposition \ref{d-open}
it is enough to show that for any  $\See\subseteq \Pee$ and any $x\not\in\cl\bigcup\See$ there is $V\in\Pee$ such that $x\in V$ and $V\cap \bigcup\See=\emptyset$.
To do  this fix arbitrary $\See\subseteq \Pee$ and $x\not\in\cl \bigcup\See$. We get $\rho(x,\cl \bigcup\See)>0$,
where $\rho$ is countable $\varkappa$-metric on $X$.
Hence by Lemma~\ref{c3}
$[x]_{\Pee}\cap \cl \bigcup\See=\emptyset$. In other words, by Lemma~\ref{c2}, we have $\bigcap\{\cl V:x\in V\in\Pee\}\cap \cl \bigcup\See=\emptyset$. Since $X$ is pseudocopmact there is $V_1,\ldots ,V_n\in\Pee$ such that $V_1\cap\ldots\cap V_n\cap \bigcup\See=\emptyset$ and $x\in V_1\cap\ldots\cap V_n$. Let $V=V_1\cap\ldots\cap V_n$. The set $V$ has requiered properties.
\end{proof}

The notion of an almost limit  was introduced by Valov \cite{val}.
We say that a space $X$ is \textit{an almost limit} of the inverse
system $\displaystyle S=\{X_\sigma, \pi^{\sigma}_\varrho, \Sigma\}$,
if there exists an embeding $q:X\to\displaystyle\underleftarrow{\lim}S$ such
that $\pi_\sigma[q[X]]=X_\sigma$ for each $\sigma\in\Sigma$. We denote
this by $X=\displaystyle\mathrm{a}-\underleftarrow{\lim}S$. Obviously, if $X=\displaystyle\mathrm{a}-\underleftarrow{\lim}S$
then $X$ is a dense subset of
$\displaystyle\underleftarrow{\lim} S$.

\begin{tw}\label{open-rep}

If $X$ is pseudocompact countably $\varkappa$-metrizable space, then  $$ X = a-\varprojlim \{ X_\sigma, \pi^\sigma_\varrho,\Sigma\},$$ where   $\{ X_\sigma, \pi^\sigma_\varrho,\Sigma\}$ is a  $\sigma$-complete inverse system, all spaces $X_\sigma$ are compact and metrizable with countable weight, and all bonding maps $\pi^\sigma_\varrho$ are open. Moreover the space
$Y=\varprojlim \{ X_\sigma, \pi^\sigma_\varrho,\Sigma\}$ is \v{C}ech-Stone compactification of $X$.
\end{tw}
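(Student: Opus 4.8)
The plan is to take as index set $\Sigma$ the family of all countable $\Qyu$-admissible subfamilies $\Pee\subseteq\RO(X)$ with $X\in\Pee$, partially ordered by inclusion, and to let $X_\Pee$ be the spaces of the system. By Theorem~\ref{t5} each $X_\Pee$ is compact and metrizable, and since $\{q[V]:V\in\Pee\}$ is a base (Lemma~\ref{l1}) it has countable weight. The argument in the proof of Lemma~\ref{factor} shows that every quotient map $q_\Pee\colon X\to X_\Pee$ is d-open, so by Lemma~\ref{claim7} each $q_\Pee$ is open. For $\Pee\subseteq\Raa$ the relation $\sim_\Raa$ refines $\sim_\Pee$, so $[x]_\Raa\mapsto[x]_\Pee$ defines a bonding map $\pi^\Raa_\Pee\colon X_\Raa\to X_\Pee$ with $q_\Pee=\pi^\Raa_\Pee\circ q_\Raa$. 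I would verify continuity of $\pi^\Raa_\Pee$ from the identity $(\pi^\Raa_\Pee)^{-1}[q_\Pee[V]]=q_\Raa[V]$ for $V\in\Pee$, and openness from the fact that $q_\Raa$ is a surjection together with openness of $q_\Pee$ (so $\pi^\Raa_\Pee[W]=q_\Pee[q_\Raa^{-1}[W]]$). The set $\Sigma$ is directed, since Lemma~\ref{dir} embeds $\Pee_1\cup\Pee_2$ into a countable $\Qyu$-admissible family.

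For $\sigma$-completeness I would note that any countable chain, or more generally any countable directed subset $\{\Pee_n:n\in\omega\}$ of $\Sigma$, has supremum $\Pee_\omega=\bigcup_n\Pee_n$, which is again countable and $\Qyu$-admissible because all the defining closure conditions are finitary. The canonical map $h\colon X_{\Pee_\omega}\to\varprojlim_n X_{\Pee_n}$, $[x]_{\Pee_\omega}\mapsto([x]_{\Pee_n})_n$, is a continuous bijection: well-definedness and injectivity hold because every $V\in\Pee_\omega$ lies in some $\Pee_n$, and surjectivity holds because $h[X_{\Pee_\omega}]$ is compact, hence closed, and contains the set $\{([x]_{\Pee_n})_n:x\in X\}$, which is dense since each projection $q_{\Pee_n}$ is onto. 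As a continuous bijection from a compact space to a Hausdorff space, $h$ is a homeomorphism.

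Next I would embed $X$ into $Y=\varprojlim S$ by $q(x)=([x]_\Pee)_{\Pee\in\Sigma}$. Continuity is immediate, as each coordinate is $q_\Pee$; injectivity follows from regularity of $X$, since a regular open set separating two distinct points lies in some $\Pee\in\Sigma$ by Lemma~\ref{dir}; and $q$ is open onto its image because $q[V]=q[X]\cap\pi_\Pee^{-1}[q_\Pee[V]]$ for $V\in\Pee$ while regular open sets form a base of $X$. Hence $q$ is an embedding, and since $\pi_\Pee\circ q=q_\Pee$ is surjective we get $\pi_\Pee[q[X]]=X_\Pee$, so $X=a\text{-}\varprojlim S$ and $X$ is dense in the compact space $Y$.

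The final and main step is to prove $Y=\beta X$, which I would do by showing that $X$ is $C^*$-embedded in $Y$. Given a continuous $f\colon X\to[0,1]$, the countable family $\Aaa=\{\int\cl f^{-1}([0,c)),\ \int\cl f^{-1}((c,1]):c\in\Qyu\}$ of regular open sets lies in some countable $\Qyu$-admissible $\Pee\in\Sigma$ by Lemma~\ref{dir}. Then $x\sim_\Pee y$ forces $f(x)=f(y)$, since any rational strictly between two distinct values of $f$ yields a member of $\Aaa\subseteq\Pee$ separating $x$ and $y$; thus $f=g\circ q_\Pee$ for a map $g\colon X_\Pee\to[0,1]$ that is continuous because $q_\Pee$ is an open continuous surjection and hence a quotient map. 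Then $g\circ\pi_\Pee\colon Y\to[0,1]$ is a continuous extension of $f$. Because $X$ is pseudocompact we have $C^*(X)=C(X)$, so this shows $X$ is $C^*$-embedded in the compactification $Y$, whence $Y=\beta X$. I expect the factorization of an arbitrary continuous function through a single $X_\Pee$, and secondarily the surjectivity in the $\sigma$-completeness step, to be the crux; the remaining verifications are routine.
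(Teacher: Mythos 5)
Your proposal is correct and follows essentially the same route as the paper: the same index set of countable $\Qyu$-admissible families ordered by inclusion, the same quotient spaces $X_\Pee$ and naturally induced bonding maps, the same embedding argument via $q[V]=q[X]\cap\pi_\Pee^{-1}[q_\Pee[V]]$, and the same strategy of factoring every continuous $f\colon X\to[0,1]$ through some $X_\Pee$ to conclude $Y=\beta X$. The differences are only at the level of detail --- you use rational level sets $\int\cl f^{-1}([0,c))$, $\int\cl f^{-1}((c,1])$ where the paper uses shrinking basic neighbourhoods of each value, and you spell out the openness of the bonding maps and the $\sigma$-completeness step, which the paper treats as obvious.
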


\begin{proof}
	Let $\Sigma=\{\Pee\subseteq\RO(X)\colon \Pee\mbox{  is a countable }\Qyu\mbox{-admissible family}\}$. The family $\Sigma$ ordered by inclusion is directed by Lemma~\ref{dir}.  Let consider $q_\Pee\colon X\to X_\Pee$ for each $\Pee\in\Sigma$. By Lemma~\ref{l1} $q_\Pee$ is continuous map and by Lemma~\ref{t5}, each $X_\Pee$ is a compact metrizable space.
	If $\Pee\subseteq\Raa$, where $\Pee,\Raa\in\Sigma$, then we have naturally defined map $\pi_{\Pee}^{\Raa}\colon X_{\Raa}\to X_{\Pee}$ such that the diagram
	$$\xymatrix{
		& X\ar[ld]_{q_{\Pee}}\ar[rd]^{q_{\Raa}}&\\
		X_{\Pee}& & X_{\Raa}\ar[ll]^{\pi_{\Pee}^{\Raa}}
	}	
	$$
	commutes. It is quite obvious that $\pi_\Pee^\Raa$ is open. Moreover for each increasing chain $\{\Pee_n\colon n\in\omega\}$ in $\Sigma$ the space $X_{\Pee}$, where $\Pee=\bigcup\{\Pee_n\colon n\in\omega\}$, is homeomorphic to $\varprojlim\{X_{\Pee_n},\omega \}$.	So,
	$\{X_\Pee,\pi_\Pee^\Raa,\Sigma\}$ constitutes a $\sigma$-complete inverse system, where all spaces $X_\Pee$ are compact and metrizable and all bonding maps $\pi_\Pee^\Raa$ are open.
	
	Now we check that the limit map
	$q=\varprojlim(q_\Pee,\Sigma)\colon X\to \varprojlim\{X_\Pee,q_\Pee^\Raa,\Sigma\}$ is an embedding. If $x,y\in X$ and $x\ne y$ then we can find a disjoint regular open sets $U,V$ such that $x\in U$ and $y\in V$. By Lemma~\ref{dir} there is $\Pee\in\Sigma$ such that $U,V\in\Pee$. So, $q(x)\ne q(y)$ because $[x]_\Pee\ne[y]_\Pee$. It remains to prove that $q[V]$ is open in $q[X]$ whenever $V\subseteq X$ is open.  We can assume that $V\in\RO(X)$. Let $\Pee\in\Sigma$ be such a family that $V\in\Pee$.
	Note that $q[V]=q[X]\cap\pi^{-1}_\Pee(q_\Pee[V])$, where $\pi_\Pee$ denotes projection from the inverse limit to $X_\Pee$. Let $a\in q[X]\cap \pi^{-1}_\Pee[q_\Pee[V]]$. Then $a=q(x)$ for some $x\in X$ and hence
	$$q_\Pee(x)=q(x)_\Pee=\pi_\Pee(q(x))=\pi_\Pee(a)\in q_\Pee[V].$$
	It means that $[x]_\Pee=[y]_\Pee$ for some $y\in V$. Therefore $x\in V$ and $a=q(x)\in q[V]$. An inclusion $q[V]\subseteq q[X]\cap\pi^{-1}_\Pee(q_\Pee[V])$
	is obvious.

\begin{claim}
For any continuous map $h:X\to[0,1]$ there exists $\Pee\in\Sigma$  and a contiunous map $g\colon X_\Pee\to[0,1]$ such that
$h=g\circ\pi_\Pee\rest X$.
\end{claim}	
	Let $h:X\to[0,1]$ and $a\in[0,1]$. There exists a sequence of open subsets $\{U^a_n:n\in\omega\}$ such that
$U^a_{n+1}\subseteq\cl U^a_{n+1}\subseteq U^a_n\subseteq[0,1]$ and $\{a\}=\bigcap_{n\in\omega} U^a_n$. Since $[0,1]$ has a
countable base $\Bee$ we may assume that $\{U^a_n:n\in\omega\}\subseteq \Bee.$  Therefore we get
\[h^{-1}(U^a_{n+1})\subseteq \int\cl h^{-1}(U^a_{n+1})\subseteq \cl h^{-1}( U^a_{n+1})\subseteq h^{-1}( \cl U^a_{n+1})\subseteq h^{-1}(U^a_n)\]	and
\[h^{-1}(\{a\})=\bigcap_{n\in\omega}\int\cl h^{-1}(U^a_n).\] There exists $\Pee\in\Sigma$ such that $\{\int\cl h^{-1}(U^a_n)\colon n\in\omega,\; a\in [0,1]\}\subseteq \Pee$. Now we shall prove that $h(x_1)=h(x_2)$, whenever $x_1\sim_{\Pee}x_2$. Let $y=h(x_1)$. Then $x_1\in \bigcap_{n\in\omega}\int\cl h^{-1}(U^y_n)=h^{-1}(y)$ and $\{\int\cl h^{-1}(U^y_n)\colon n\in\omega\}\subseteq\Pee$. Since $x_1\sim_{\Pee}x_2$, we get  $x_2\in \int\cl h^{-1}(U^y_n)$ for every $n\in\omega$, what implies  $x_2\in h^{-1}(y).$ Define a map $g$ by the formula $g([x]_{\Pee})=h(x)$ for any $x\in X$. In order to show that $g$ is continuous we will prove that  $g^{-1}(U)=\pi_\Pee(h^{-1}(U))$ for any open subset $U\subseteq[0,1].$  We have the following equivalence
\[[x]_\Pee\in g^{-1}(U)\Leftrightarrow g([x]_\Pee)\in U\Leftrightarrow h(x)\in U\Leftrightarrow x\in h^{-1}(U) .\]
Since $h(x_1)=h(x_2)$ whenever $x_1\sim_{\Pee}x_2$, we get $x\in h^{-1}(U)\Leftrightarrow [x]_\Pee\in \pi_\Pee(h^{-1}(U)).$ This competes the proof of the claim.

Given a continuous map $h:X\to [0,1]$ by the Claim there  exists $\Pee\in\Sigma$  and a contiunous map $g:X_\Pee\to[0,1]$ such that $h=g\circ \pi_\Pee\rest X$. The map $g\circ \pi_\Pee:Y\to[0,1]$ is required extension of $h$, hence $Y=\beta X.$
\end{proof}

By the result of Kucharski~\cite{k12} (see also \cite{bla}) and Theorem \ref{open-rep} we get the following
Corollary, which generalizes Chigogidze's result~\cite[Corollary 2]{ch82} about pseudocompact $\varkappa$-metrizable space.

\begin{wn}\label{kappa-ccc}
Any pseudocompact conutably $\varkappa$-metrizable space is ccc.
\end{wn}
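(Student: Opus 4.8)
The plan is to read the conclusion off the structure theorem just established, so that the Corollary becomes a two-step deduction. By Theorem~\ref{open-rep}, a pseudocompact countably $\varkappa$-metrizable space $X$ is (homeomorphic to) a dense subspace of $Y=\beta X=\varprojlim\{X_\sigma,\pi^\sigma_\varrho,\Sigma\}$, where the system is $\sigma$-complete, each $X_\sigma$ is compact metrizable and hence ccc, and every bonding map $\pi^\sigma_\varrho$ is open. This is exactly the situation governed by the cited result of Kucharski~\cite{k12}: the limit of a $\sigma$-complete inverse system with open bonding maps and ccc factors is itself ccc. So the first step is simply to invoke that theorem and conclude that $Y=\beta X$ is ccc.

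The second step is to transfer the countable chain condition from $Y$ down to its dense subspace $X$. I would rely on the fact that ccc is a property of the regular-open algebra and is therefore shared by a space and all of its dense sub- and superspaces. Concretely, given a pairwise disjoint family $\{W_\alpha\}$ of nonempty open subsets of $X$, I would write each $W_\alpha=U_\alpha\cap X$ with $U_\alpha$ open and nonempty in $Y$. Since $X$ is dense in $Y$, every nonempty open subset of $Y$ meets $X$; as $U_\alpha\cap U_\beta\cap X=W_\alpha\cap W_\beta=\emptyset$ for $\alpha\ne\beta$, this forces $U_\alpha\cap U_\beta=\emptyset$. Thus $\{U_\alpha\}$ is a pairwise disjoint family of nonempty open subsets of the ccc space $Y$, hence countable, and therefore $\{W_\alpha\}$ is countable as well. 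This yields that $X$ is ccc.

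The argument is short precisely because the substantive work is already done: the real content lives in Theorem~\ref{open-rep} (producing the open, $\sigma$-complete representation with metrizable factors) and in Kucharski's theorem (ccc-ness of such limits). The only point requiring genuine care, and the one I expect to be the main—if modest—obstacle, is the density transfer of the second paragraph: one must verify that disjointness of the traces on $X$ truly lifts to disjointness in $Y$, which is exactly where denseness of the embedding $X\hookrightarrow Y=\beta X$ supplied by the ``almost limit'' $X=a\text{-}\varprojlim\{X_\sigma,\pi^\sigma_\varrho,\Sigma\}$ is used. If the result of~\cite{k12} is instead stated directly for almost limits, one applies it to this representation of $X$ and dispenses with the transfer altogether.
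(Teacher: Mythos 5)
Your proposal is correct and follows essentially the same route as the paper, which derives the corollary exactly by combining Theorem~\ref{open-rep} with the result of Kucharski~\cite{k12} (see also~\cite{bla}) on ccc-ness of limits of $\sigma$-complete inverse systems with open bonding maps and ccc (compact metrizable) factors. The density-transfer step you spell out (ccc passing from $\beta X$ to its dense subspace $X$) is the standard argument the paper leaves implicit.
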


For example
$\omega_1$ with the order topology is  not countably $\varkappa$-metrizable since it is pseudocompact and is not ccc.

We have proved that for ccc spaces or  pseudocompact spaces, conutably $\varkappa$-metrizable spaces coincides with $\varkappa$-metrizable spaces.

\begin{question}
For which class of spaces does conutably $\varkappa$-metrizability  coincide with $\varkappa$-metrizability?
\end{question}

\section{ \v{C}ech-Stone compactification of $\varkappa$-metrizable\\ space }

A. Chigogidze announced in \cite{ch82} without a proof that \v{C}ech-Stone compactification of $\varkappa$-metrizable space is $\varkappa$-metrizable. Next G. Dimov gave in \cite{d83} sufficient and necessary conditions for compact Hausdorff extension of $\varkappa$-metrizable space to be $\varkappa$-metrizable. In this section we will give a simple  proof that \v{C}ech-Stone compactification of pseudocompact countably $\varkappa$-metrizable space is $\varkappa$-metrizable.

Firstly, note the following simple observation.

\begin{lm}\label{claim6}
	Let $X$ be completely regular space. If $F\in\RC(\beta X)$ then $F\cap X\in\RC(X)$. \qed
\end{lm}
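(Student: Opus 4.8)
The plan is to exploit the density of $X$ in $\beta X$ together with the fact that the interior of a regular closed set is open. Write $U=\intb F$, so that $U$ is open in $\beta X$ and, because $F$ is regular closed, $F=\clb U$. First I would record the easy inclusion: since $F$ is closed in $\beta X$, the trace $F\cap X$ is closed in $X$, whence $\cl_X\Int_X(F\cap X)\subseteq\cl_X(F\cap X)=F\cap X$.

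For the reverse inclusion the key observation is that $U\cap X$ is open in $X$ and contained in $F\cap X$, so $U\cap X\subseteq\Int_X(F\cap X)$; thus it suffices to prove $F\cap X\subseteq\cl_X(U\cap X)$. Fix $x\in F\cap X$ and let $W$ be an arbitrary neighbourhood of $x$ in $X$, say $W=W'\cap X$ with $W'$ open in $\beta X$. Since $x\in F=\clb U$ and $x\in W'$, the open set $W'\cap U$ is nonempty; by density of $X$ in $\beta X$ it then meets $X$, so $W\cap(U\cap X)=W'\cap U\cap X\ne\emptyset$. Hence every neighbourhood of $x$ meets $U\cap X$, i.e. $x\in\cl_X(U\cap X)\subseteq\cl_X\Int_X(F\cap X)$. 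Combining the two inclusions yields $F\cap X=\cl_X\Int_X(F\cap X)$, that is, $F\cap X\in\RC(X)$.

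The only step requiring care, and the place where the hypothesis is actually used, is the passage from $W'\cap U\ne\emptyset$ to $W'\cap U\cap X\ne\emptyset$, which rests solely on $X$ being dense in $\beta X$; everything else is a formal manipulation of closures and interiors. No appeal to any $\varkappa$-metric structure is needed, which is why the assertion is recorded here as a simple observation.
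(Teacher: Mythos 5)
Your proof is correct, and it is exactly the standard argument this lemma rests on: the paper itself offers no proof at all (the statement is recorded with a \qed{} as a ``simple observation''), the intended justification being precisely that $X$ is dense in $\beta X$, so the trace of a regular closed set on a dense subspace is regular closed. Your write-up, reducing everything to the single density step $W'\cap U\ne\emptyset\Rightarrow W'\cap U\cap X\ne\emptyset$, is a complete and faithful filling-in of that omitted argument.
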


All topological spaces $X$ considered below are assumed to be pseudocompact and
countably $\varkappa$-metrizable, with countable $\varkappa$-metric $\rho$. So, each continuous function $\rho(\cdot ,F\cap X)\colon  X\to[0,\infty)$ is bounded say by $b_F\in[0,\infty)$ for each regular closed subset $F\subseteq\beta X$. Hence we can extend each function $ \rho(\cdot ,F\cap X)$ to continuous function $\bar{\rho}(\cdot ,F\cap X):\beta X\to
[0,b_F]$. Now we shall prove that a
function $\psi\colon \beta X\times \RC(\beta X)\to \mathbb{R}$ defined by the formula
$$\psi(p, F)=\bar{\varrho}(p, F\cap X)$$
is countable $\varkappa$-metric or
satisfies condition $(K1)$, $(K2)$ and $(K4_\omega)$ (the condition $(K3)$ is obviously fulfilled).

\begin{claim}[$K1$]\label{K1}
	Let $F'\in\RC(\beta X)$ and  $F=F'\cap X$.
	\begin{enumerate}
		\item If $p\in F'$ then $\bar{\rho}(p ,F)=0$.
		\item if $\bar{\rho}(p ,F)=0$, then $p\in F'$.
		
	\end{enumerate}
\end{claim}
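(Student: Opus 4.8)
The plan is to exploit the identity $F'=\cl_{\beta X}F$, which governs both halves. Since $F'$ is regular closed, $F'=\cl_{\beta X}\int_{\beta X}F'$, and as $X$ is dense in $\beta X$ the trace $\int_{\beta X}F'\cap X$ is dense in $\int_{\beta X}F'$; each point of that trace lies in $F'\cap X=F$, so $\int_{\beta X}F'\subseteq\cl_{\beta X}F$, whence $F'\subseteq\cl_{\beta X}F\subseteq F'$. Recall also that $F=F'\cap X\in\RC(X)$ by Lemma~\ref{claim6}, so $\rho(\cdot,F)$ and its bounded continuous extension $\bar\rho(\cdot,F)\colon\beta X\to[0,b_F]$ are legitimate.

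For part (1) I would argue purely by continuity. Axiom $(K1)$ for $\rho$ on $X$ gives $\rho(x,F)=0$ for every $x\in F$, and since $\bar\rho(\cdot,F)$ extends $\rho(\cdot,F)$, the continuous function $\bar\rho(\cdot,F)$ vanishes on $F$, hence on $\cl_{\beta X}F=F'$. Thus $p\in F'$ yields $\bar\rho(p,F)=0$.

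For part (2) I would prove the contrapositive: if $p\notin F'$ then $\bar\rho(p,F)>0$. As $\beta X$ is compact Hausdorff, hence regular, choose an open $W\ni p$ with $\cl_{\beta X}W\cap F'=\emptyset$, so in particular $\cl_{\beta X}W\cap F=\emptyset$. Put $U=W\cap X$, a nonempty open subset of $X$; then $\cl_X U\subseteq\cl_{\beta X}W\cap X$ is disjoint from $F$, so $\rho(\cdot,F)>0$ at every point of $\cl_X U$ by $(K1)$. The crucial step, and the one I expect to be the main obstacle, is to upgrade this pointwise positivity to a uniform positive lower bound; pointwise positivity of a $\varkappa$-metric carries no such bound on its own. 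Here pseudocompactness does the work: $\cl_X U$ is pseudocompact (the closure of a nonempty open subset of a pseudocompact space is pseudocompact, as used already in Lemma~\ref{claim7}), so the strictly positive continuous function $\rho(\cdot,F)\rest\cl_X U$ has a strictly positive infimum $\delta$ (otherwise its reciprocal would be an unbounded continuous function on a pseudocompact space). Consequently $\rho(\cdot,F)\geq\delta$ on $U$, and since $U$ is dense in $W$ while $\bar\rho(\cdot,F)$ is continuous, $\bar\rho(\cdot,F)\geq\delta$ on $W\ni p$, contradicting $\bar\rho(p,F)=0$.

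The only delicate points to verify are that $\cl_X U$ is genuinely pseudocompact and the auxiliary lemma that a strictly positive continuous function on a pseudocompact space is bounded away from $0$; everything else reduces to separation in the compact Hausdorff space $\beta X$ together with the density of $X$.
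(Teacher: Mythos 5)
Your proof is correct, and for part (2) it takes a genuinely different route from the paper's. For part (1) the two arguments use the same ingredients---density of $X$ in $\beta X$ and regular-closedness of $F'$---just packaged differently: the paper argues by contradiction inside a neighbourhood on which $\bar{\rho}(\cdot,F)>b$, finding a point of $X\cap\intb F'$ there, while you observe once and for all that $F'=\clb F$ and that the zero set of the continuous function $\bar{\rho}(\cdot,F)$ is closed. For part (2), however, the paper does not prove the contrapositive via a uniform bound; instead it supposes $p\notin F'$ and $\bar{\rho}(p,F)=0$, builds a decreasing sequence of neighbourhoods $V_n\ni p$ with $\bar{\rho}(\cdot,F)<\frac1n$ on $V_n$, $V_n\cap F'=\emptyset$ and $\clb V_{n+1}\subseteq V_n$, and invokes pseudocompactness of $X$ in its nested-open-sets form to obtain a single point $y_0\in\bigcap_n\clb V_n\cap X$; this $y_0$ lies outside $F$ yet satisfies $\rho(y_0,F)\leq\frac1n$ for all $n$, contradicting (K1). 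You instead use one separating neighbourhood $W$ and deploy pseudocompactness in a different guise, twice: $\cl_X(W\cap X)$ is pseudocompact (the fact from Lemma~\ref{claim7}), and a strictly positive continuous function on a pseudocompact space has strictly positive infimum; density of $W\cap X$ in $W$ then propagates the bound $\delta$ to $\bar{\rho}(\cdot,F)$ on all of $W$. Your version yields a quantitative strengthening (a uniform lower bound on a whole neighbourhood of $p$, not merely $\bar{\rho}(p,F)>0$) at the price of two auxiliary facts, both standard and one already cited in the paper; the paper's sequential argument is more self-contained, needing only the nested-sequence characterization of pseudocompactness applied directly to $X$. Both are complete proofs.
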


\begin{proof} $(1)$
	Suppose that $\bar{\rho}(p ,F)>0$. Then there is an open neighbourhood $V\subseteq \beta X$ of $p$ and $b>0$ such that
	$\bar{\rho}(q ,F)>b$ for each $q\in V$. Since $V\cap \intb F'\ne\emptyset$ there is $y\in X\cap V\cap \intb F'$. Therefore
	$b<\bar{\rho}(y ,F)=\rho(y ,F)=0$, a contradiction.
	
	$(2)$  Suppose that $p\not\in F'$. There are open neighborhoods $V_n\subseteq \beta X$ of $p$ such that $\bar{\rho}(y,F)<\frac 1 n$ for $y\in V_n$, $V_n\cap F'=\emptyset$ and $\clb V_{n+1}\subseteq V_n$. Since $X$ is pseudocompact space
	$\bigcap_{n\in\omega}\clb V_n\cap X\ne\emptyset$. Let $y_0\in \bigcap_{n\in\omega}\clb V_n\cap X$. Therefore $\frac 1 n
	\geq\bar{\rho}(y_0,F)=\rho(y_0,F)>0$ for each $n\in\omega$, a contradiction.
\end{proof}

\begin{claim}[$K2$]\label{K2}
	Let $F',G'\in\RC(\beta X)$ and  $F=F'\cap X$ and $G=G'\cap X$. If $F'\subseteq G',$ then $\bar{\rho}(p ,F)\geq \bar{\rho}(p ,G)$
	for every $p\in\beta X$.
\end{claim}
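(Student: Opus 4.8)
The plan is to reduce the asserted inequality on $\beta X$ to the monotonicity axiom (K2) of the countable $\varkappa$-metric $\rho$ on $X$ itself, and then transfer it to all of $\beta X$ by a density argument.

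First I would pass to restrictions. By Lemma~\ref{claim6} both $F=F'\cap X$ and $G=G'\cap X$ lie in $\RC(X)$, so $\rho(\cdot,F)$ and $\rho(\cdot,G)$ are genuinely defined on $X$ and the functions $\bar{\rho}(\cdot,F),\bar{\rho}(\cdot,G)$ are their bounded continuous extensions to $\beta X$. The inclusion $F'\subseteq G'$ gives at once
$$F=F'\cap X\subseteq G'\cap X=G,$$
so $F\subseteq G$ as elements of $\RC(X)$. Applying axiom (K2) for $\rho$ on $X$ then yields
$$\rho(x,F)\geq\rho(x,G)\quad\text{for every }x\in X.$$

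Next I would extend this to $\beta X$. By construction $\bar{\rho}(x,F)=\rho(x,F)$ and $\bar{\rho}(x,G)=\rho(x,G)$ for every $x\in X$, so the continuous function $h=\bar{\rho}(\cdot,F)-\bar{\rho}(\cdot,G)$ on $\beta X$ is nonnegative on $X$. The set $\{p\in\beta X:h(p)\geq 0\}$ is closed (being the preimage of a closed half-line under the continuous map $h$) and contains the dense subset $X$; hence it is all of $\beta X$, which is exactly the desired inequality $\bar{\rho}(p,F)\geq\bar{\rho}(p,G)$ for every $p\in\beta X$.

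There is no serious obstacle here: once the two ingredients are assembled the conclusion is immediate. The only point needing care is to confirm that the inequality we wish to extend truly holds on $X$, and this is guaranteed precisely because Lemma~\ref{claim6} places $F,G$ in $\RC(X)$, so that the honest $\varkappa$-metric axiom (K2) applies on $X$. No fresh appeal to pseudocompactness is required beyond its earlier role in guaranteeing that $\rho(\cdot,F\cap X)$ is bounded and hence admits the continuous extension $\bar{\rho}$; the step from $X$ to $\beta X$ is a pure continuity-and-density observation.
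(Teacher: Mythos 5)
Your proof is correct and follows essentially the same route as the paper's: both reduce the inequality to axiom (K2) of $\rho$ on $X$ (via $F=F'\cap X\subseteq G'\cap X=G$) and then transfer it to $\beta X$ using continuity of the extensions and density of $X$. The only cosmetic difference is that the paper argues by contradiction (a strict reverse inequality at some $p$ would persist on a neighbourhood, which must meet $X$), whereas you phrase the same density argument directly via the closedness of the set where $\bar{\rho}(\cdot,F)-\bar{\rho}(\cdot,G)\geq 0$.
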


\begin{proof}
	Suppose that there exists $p\in\beta X$ such that $ \bar{\rho}(p ,F)< \bar{\rho}(p ,G).$ There is a neighborhood $V$ of
	$p$ such that $ \bar{\rho}(q ,F)< \bar{\rho}(q ,G)$ for every $q\in V$. Let $a\in V_x\cap X$, then $\rho(a ,F)= \bar{\rho}(a ,F)<\bar{\rho}(a,G)=\rho(a ,G)$, a contradiction.
\end{proof}

\begin{claim}[$K4_\omega$]\label{K4}
	Let $\{F_n'\colon n\in\omega\}\subseteq\RC(\beta X)$ be a such family that $F'_n\subseteq F'_{n+1}$ and let $F_n=F_n'\cap X$. Then
	$\bar{\rho}(p ,\clb(\bigcup_{n\in\omega}F_n)\cap X)=\inf\{\bar{\rho}(p ,F_n):n\in\omega\}$
	for every $p\in\beta X$.
\end{claim}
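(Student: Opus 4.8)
The plan is to deduce the identity on all of $\beta X$ from its restriction to the dense subspace $X$, where it is nothing but condition $(K4_\omega)$ for $\rho$; the whole difficulty lies at the points of $\beta X\setminus X$ and is handled by pseudocompactness. First I would record the reductions. Put $G'=\clb(\bigcup_{n\in\omega}F_n)$. Since each $F_n'\in\RC(\beta X)$ and $X$ is dense, one checks that $\clb F_n=F_n'$ (so that $\intb F_n'\cap X$ is dense in $\intb F_n'$); consequently $F_n'\subseteq G'$ for every $n$, and $G'=\clb(\bigcup_n\intb F_n')$ is the closure of an open set, hence $G'\in\RC(\beta X)$ and $G'\cap X=\clb(\bigcup_nF_n)\cap X=\cl_X(\bigcup_nF_n)\in\RC(X)$. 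Because $F_n'\subseteq F_{n+1}'$, the sets $F_n=F_n'\cap X$ form a non-decreasing chain in $\RC(X)$, so $(K4_\omega)$ for $\rho$ gives, for every $x\in X$,
\[ \rho\Bigl(x,\cl_X\bigl(\bigcup_{n}F_n\bigr)\Bigr)=\inf_{n}\rho(x,F_n). \]
Writing $g(p)=\bar\rho(p,G'\cap X)$ and $\phi_n(p)=\bar\rho(p,F_n)$, this says exactly that $g=\inf_n\phi_n$ holds on the dense set $X$, and the goal is to promote it to all of $\beta X$.

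The inequality $g(p)\le\inf_n\phi_n(p)$ is immediate: applying Claim~\ref{K2} to the inclusion $F_n'\subseteq G'$ gives $\phi_n(p)\ge g(p)$ for every $p$ and every $n$, whence $g(p)\le\inf_n\phi_n(p)$. By Claim~\ref{K2} again the $\phi_n$ are non-increasing in $n$, so $\inf_n\phi_n=\lim_n\phi_n$ is upper semicontinuous.

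For the reverse inequality I would argue by contradiction, following the pattern of Claim~\ref{K1}(2). Fix $p$ with $\inf_n\phi_n(p)>g(p)$ and choose reals $g(p)<d'<d<\inf_n\phi_n(p)$. Using continuity of $g$, pick an open $W_0\ni p$ with $g<d'$ on $W_0$; using continuity of the $\phi_n$ and regularity of $\beta X$, build open sets $W_{n+1}\ni p$ with $\clb W_{n+1}\subseteq W_n\subseteq W_0$ and $\phi_n>d$ on $W_n$. Pseudocompactness of $X$ then produces a point $y_0\in\bigcap_n\clb W_n\cap X$ exactly as in Claim~\ref{K1}(2), since the $W_n\cap X$ form a decreasing sequence of nonempty open subsets of $X$. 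As $y_0\in W_n$ for all $n$ and $y_0\in X$, we get $\rho(y_0,F_n)=\phi_n(y_0)>d$ for every $n$, so $\inf_n\rho(y_0,F_n)\ge d$; but $y_0\in W_0$ gives $\rho(y_0,\cl_X(\bigcup_nF_n))=g(y_0)<d'$, contradicting the displayed instance of $(K4_\omega)$ at $y_0$. Hence $\inf_n\phi_n(p)\le g(p)$, which together with the easy inequality yields the claim.

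The main obstacle is precisely this last step. Since $\inf_n\phi_n$ is merely upper semicontinuous, its agreement with the continuous function $g$ on the dense set $X$ does not by itself propagate to $\beta X\setminus X$: in principle the infimum could jump upward at an ideal point. Pseudocompactness is exactly what rules this out, by forcing a witness $y_0$ to live inside $X$, where $(K4_\omega)$ for $\rho$ is available.
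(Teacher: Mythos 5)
Your proof is correct and follows essentially the same route as the paper's: the easy inequality via Claim~(K2), then a contradiction argument that nests neighbourhoods with $\clb W_{n+1}\subseteq W_n$, uses pseudocompactness (exactly as in Claim~(K1)(2)) to produce a witness $y_0\in X$, and invokes $(K4_\omega)$ for $\rho$ at $y_0$. Your one-sided bound $\phi_n>d$ on the shrinking neighbourhoods is in fact a slight cleanup of the paper's two-sided pinning $\bar{\rho}(\cdot,F_n)\in\left(a-\tfrac1n,a+\tfrac1n\right)$ (which strictly speaking needs the monotone convergence of $\bar{\rho}(x,F_n)$ to $a$), and your preliminary check that $\clb\bigl(\bigcup_n F_n\bigr)\in\RC(\beta X)$ makes explicit what the paper leaves implicit.
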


\begin{proof} Let $F=\clb(\bigcup_{n\in\omega}F_n)\cap X$. By Claim \ref{K2} we get an inequality $"\leq "$.
	Suppose that $\bar{\rho}(x ,F)<b<\inf\{\bar{\rho}(x ,F_n):n\in\omega\}=a.$ There exists neighbourhood $V$
	of $x$ such that $\bar{\rho}(y ,F)<b$ for every $y\in V$. There is a sequence $\{V_n:n\in\omega\}$ of open
	sets of $\beta X$ such that $\clb V_{n+1}\subseteq V_n\subseteq\clb V_n\subseteq V$ and $\bar{\rho}(y ,F_n)\in
	(a-\frac 1 n,a+\frac 1 n)$ for every $y\in V_n$. Since $X$ is pseudocompact there is $y\in\bigcap_{n\in\omega}
	\clb V_n\cap X$. Therefore
		$$b<a=\inf\{\bar{\rho}(y,F_n):n\in\omega\}=\inf\{\rho(y ,F_n):n\in\omega\}=\rho(y ,F)=\bar{\rho}(y,F)<b,$$
		a contradiction.
\end{proof}
\begin{tw}
	If $X$ is pseudocompact countably $\varkappa$-metrizable space then $\beta X$ is $\varkappa$-metrizable.
\end{tw}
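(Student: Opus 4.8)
The plan is to recognise that the function $\psi\colon\beta X\times\RC(\beta X)\to\mathbb{R}$, $\psi(p,F')=\bar\rho(p,F'\cap X)$, constructed before the statement is already the desired $\varkappa$-metric on $\beta X$, and that essentially all of the real work has been carried out in the preceding Claims. Indeed, Claim~\ref{K1} gives axiom $(K1)$, Claim~\ref{K2} gives $(K2)$, the extended function is continuous so $(K3)$ holds (as noted just before the statement), and Claim~\ref{K4} gives the countable axiom $(K4_\omega)$. Taken together these show that $\psi$ is a \emph{countable} $\varkappa$-metric; hence $\beta X$ is countably $\varkappa$-metrizable. The only remaining task is to upgrade $(K4_\omega)$ to the full axiom $(K4)$ for arbitrary non-decreasing totally ordered chains in $\RC(\beta X)$.

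First I would record that $\beta X$ is compact Hausdorff, hence Tychonoff and pseudocompact, so that the very notion of $\varkappa$-metrizability applies to it and, more importantly, Corollary~\ref{kappa-ccc} is available. Applying that corollary to the space $\beta X$, which we have just shown to be pseudocompact and countably $\varkappa$-metrizable, yields that $\beta X$ satisfies the countable chain condition.

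Finally I would invoke Remark~\ref{ccc}: in a ccc space every countable $\varkappa$-metric is automatically a $\varkappa$-metric, because any non-decreasing totally ordered family $\{C_\alpha:\alpha<\lambda\}\subseteq\RC(\beta X)$ either has $\lambda=\aleph_0$, where $(K4_\omega)$ applies directly, or stabilises after countably many steps, so that $(K4)$ collapses to a single value. Thus $\psi$ satisfies $(K4)$ as well, and therefore $\beta X$ is $\varkappa$-metrizable. In effect the theorem is a two-line deduction from Claims~\ref{K1}--\ref{K4}, Corollary~\ref{kappa-ccc} and Remark~\ref{ccc}.

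As for where the difficulty lies: the substantive obstacle is entirely absorbed into the earlier Claims, and in particular into Claim~\ref{K4}, where pseudocompactness of $X$ is used to extract a point $y\in\bigcap_{n}\clb V_n\cap X$ on which the extension $\bar\rho$ coincides with $\rho$ and forces the contradiction proving $(K4_\omega)$. The passage from the countable axiom to the unrestricted one is then a soft structural fact about ccc spaces rather than a fresh verification, which is precisely why the pseudocompactness hypothesis is doing double duty here: it both makes the extension $\bar\rho$ well defined and bounded, and (through Corollary~\ref{kappa-ccc}) supplies the ccc property that licenses the final upgrade.
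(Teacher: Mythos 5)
Your proposal is correct and is essentially the paper's own proof, which consists of exactly the same three ingredients: the preceding Claims establishing that $\psi(p,F')=\bar\rho(p,F'\cap X)$ is a countable $\varkappa$-metric on $\beta X$, Corollary~\ref{kappa-ccc} supplying the countable chain condition, and Remark~\ref{ccc} upgrading $(K4_\omega)$ to $(K4)$. The only (immaterial) freedom is whether one applies Corollary~\ref{kappa-ccc} to $\beta X$ directly, as you do, or to $X$ and then passes ccc to $\beta X$ by density of $X$.
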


\begin{proof}
	We use Remark~\ref{ccc}, Corollary~\ref{kappa-ccc} and previous Claims.
\end{proof}

\end{document}